\newtheorem{thm}{Theorem}[section]
\newtheorem{prop}{Proposition}[section]
\newtheorem{coro}{Corollary}[section]
\newtheorem{alphthm}{Theorem}[section]
\newcommand{\lct}{\; \raisebox{-.96ex}{$\stackrel{\textstyle <}{\sim}$} \;}
\newcommand{\mbb}{\mathbb}
\begin{document}

\title{Mizohata-Takeuchi estimates in the plane}

\author{Bassam Shayya}
\address{Department of Mathematics\\
         American University of Beirut\\
         Beirut\\
         Lebanon}
\email{bshayya@aub.edu.lb}

\date{August 22, 2022}

\subjclass[2010]{42B10, 42B20.}

\begin{abstract}
Suppose $S$ is a smooth compact hypersurface in $\mbb R^n$ and $\sigma$ is 
an appropriate measure on $S$. If $Ef= \widehat{fd\sigma}$ is the extension 
operator associated with $(S,\sigma)$, then the Mizohata-Takeuchi conjecture 
asserts that $\int |Ef(x)|^2 w(x) dx \lct$
$(\sup_T w(T)) \| f \|_{L^2(\sigma)}^2$ for all functions 
$f \in L^2(\sigma)$ and weights $w : \mbb R^n \to [0,\infty)$, where the 
$\sup$ is taken over all tubes $T$ in $\mbb R^n$ of cross-section 1, and 
$w(T)= \int_T w(x) dx$. This paper investigates how far we can go in proving 
the Mizohata-Takeuchi conjecture in $\mbb R^2$ if we only take the decay 
properties of $\widehat{\sigma}$ into consideration. As a consequence of our 
results, we obtain new estimates for a class of convex curves that include 
exponentially flat ones such as $(t,e^{-1/t^m})$, $0 \leq t \leq c_m$, 
$m \in \mbb N$.
\end{abstract}

\maketitle

\section{Introduction}

Let $S$ be a smooth compact hypersurface in $\mbb R^n$, and $\sigma$ a 
finite Borel measure on $S$. The Fourier extension operator $E$ associated 
with the pair $(S,\sigma)$ is defined as
\begin{displaymath}
Ef(x)
= \widehat{f d\sigma}(x) = \int e^{-2\pi i x \cdot \xi} f(\xi) d\sigma(\xi)
\end{displaymath}
for $f \in L^1(\sigma)$.

Suppose $w$ is a non-negative Lebesgue measurable function on $\mbb R^n$. If 
$T$ is the $(1/2)$-neighborhood of a line $L$ in $\mbb R^n$, we say $T$ is a 
1-tube and call $L$ the core line of $T$. For such tubes, 
we define
\begin{displaymath}
w(T) = \int_T w(x) dx.
\end{displaymath}

If $\sigma$ is the surface measure on  $S$, then the Mizohata-Takeuchi 
conjecture (\cite{sm:cauchy}, \cite{jt:evolution}, and \cite{jt:kowalew}) 
asserts that
\begin{displaymath}
\int |Ef(x)|^2 w(x) dx \lct \big( \sup_T w(T) \big) \| f \|_{L^2(\sigma)}^2 
\end{displaymath}
for all $f \in L^2(\sigma)$, where the sup is taken over all 1-tubes $T$ in 
$\mbb R^n$. This conjecture is open in all dimensions $n \geq 2$, but there 
are some known partial results. For example, the conjecture is true if $S$ 
is the unit sphere $\mbb S^{n-1} \subset\mbb R^n$ and the weight $w$ is 
radial. This result was shown, independently, in \cite{brv:helmholtz} and 
\cite{cs:fourinv} (see also \cite{crs:radial} and \cite{csv:means}).  

There is also a known local version of the above estimate that was proved in
\cite{bbc:localized} for the unit circle $\mbb S^1$: to every $\epsilon > 0$ 
there is a constant $C_\epsilon$ such that
\begin{equation}
\label{localmt}
\int_{|x| \leq R} |Ef(x)|^2 w(x) dx \leq C_\epsilon R^\epsilon R^{1/2} 
\big( \sup_T w(T) \big) \| f \|_{L^2(\sigma)}^2 
\end{equation}
for all $f \in L^2(\sigma)$, where the sup is taken over all 1-tubes $T$ in 
$\mbb R^2$. (See (\ref{localmtinf}) in the next section for a slightly 
better estimate than (\ref{localmt}).)   

For more results on the Mizohata-Takeuchi conjecture and its applications in 
Fourier analysis and PDE, we refer the reader to the papers 
\cite{bcsv:stein}, \cite{bbcrv:solutions}, \cite{ac:occupancy}, 
\cite{bn:tomography}, and \cite{chv:mtkak}.

The main results of this paper are stated in the following two theorems. 
Both theorems are stated in dimension $n=2$, but their method of proof 
applies and may very well lead to new results in higher dimensions; we 
restrict ourselves to $\mbb R^2$ to keep the technical details from 
distracting our attention from the main ideas. In both theorems, we think of 
$\sigma$ as a finite Borel measure on $\mbb R^2$ rather than a measure on a 
curve $S$, consider that the extension operator $E$ is associated with 
$\sigma$ rather than $S$, and study the Mizohata-Takeuchi conjecture through 
the decay properties of the Fourier transform of $\sigma$. (When $\sigma$ is 
supported on $S$, the decay of $\widehat{\sigma}$ often reflects some 
geometric property of $S$.)  

For $m \in \mbb R$, we denote by $\mbb T_m$ the set of all 1-tubes in 
$\mbb R^2$ whose core lines are parallel to either of the two lines 
$\{ x=(x_1,x_2) \in \mbb R^2 : m x_1+x_2 = 0 \mbox{ or } x_1 + m x_2 =0 \}$.
We note that if $m \not= 0$, then $\mbb T_m = \mbb T_{1/m}$.

\begin{thm}
\label{mainjone}
Suppose $0 < \delta \leq 1$ and $\sigma$ is a finite Borel measure on 
$\mbb R^2$ whose Fourier transform obeys the decay estimate
\begin{equation}
\label{decayjone}
|\widehat{\sigma}(x_1,x_2)| \leq \left\{
\begin{array}{ll}
C |x_1|^{-\delta} & \mbox{ if $|x_1| \geq 1$ and $|x_2| \leq 1$,} \\
C |x_2|^{-\delta} & \mbox{ if $|x_2| \geq 1$ and $|x_1| \leq 1$,} \\
C |x_1 x_2|^{-\delta} & \mbox{ if $|x_1| \geq 1$ and $|x_2| \geq 1$.} 
\\
\end{array} \right.
\end{equation} 
Also, suppose that the weight $w$ is a tensor function of the form
\begin{equation}
\label{tensor}
w(x) = w(x_1,x_2) = \widetilde{w}(a x_1) \widetilde{w}(b x_2),	
\end{equation}
where $\widetilde{w} : \mbb R \to [0,\infty)$ is Lebesgue measurable and
$a,b \in \mbb R$ are positive constants. Then, for $q > 1/\delta$, we have
\begin{equation}
\label{starjone}
\int |Ef(x)|^q w(x) dx 
\lct \big( \sup_{T \in \mbb T_{a/b}} w(T) \big) \| f \|_{L^2(\sigma)}^q 	
\end{equation}
for all $f \in L^2(\sigma)$. {\rm (}The implicit constant in {\rm 
(\ref{starjone})} \!\!\! depends only on $C$, $\delta$, $q$, and 
$\| \sigma \|$.{\rm )}
\end{thm}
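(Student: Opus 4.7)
My plan is the following.  First, under the anisotropic scaling $(y_1,y_2)=(ax_1,bx_2)$, the problem reduces to the case $a=b=1$, where the pushforward measure $\tilde\sigma$ still satisfies (\ref{decayjone}) (with modified constants), the weight becomes $\tilde w(y_1)\tilde w(y_2)$, and the tubes in $\mbb T_{a/b}$ become 1-tubes along the antidiagonal direction $(1,-1)$.  In this normalization, $\sup_T w(T)$ coincides up to constants with the one-dimensional quantity $\|\tilde w*\tilde w*\chi\|_\infty$ for a fixed compactly supported bump $\chi$.

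Second, I would reformulate via a $TT^*$ duality argument.  With $T:L^2(\sigma)\to L^q(w\,dx)$ defined by $Tf=Ef$, one computes $TT^*g(x)=\int w(y)g(y)\widehat\sigma(x-y)\,dy$, and the operator-norm identity $\|T\|^2=\|TT^*\|_{L^{q'}(w\,dx)\to L^q(w\,dx)}$, together with H\"older's inequality applied in the $y$-integration, reduces matters to the bilinear estimate
\[
\iint w(x)w(y)\,|\widehat\sigma(x-y)|^q\,dx\,dy \lct (\sup_T w(T))^2.
\]

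Third, I would exploit the tensor structure.  The hypothesis (\ref{decayjone}) gives the product-type majorant $|\widehat\sigma(u_1,u_2)|\lct(1+|u_1|)^{-\delta}(1+|u_2|)^{-\delta}$, which combined with the tensor form of $w$ factors the above double integral, via Fubini, into the square of the one-dimensional quantity
\[
I := \int R(t)(1+|t|)^{-q\delta}\,dt, \qquad R(t):=\int\tilde w(s)\tilde w(s-t)\,ds.
\]
The theorem then reduces to proving the 1D inequality $I\lct\sup_T w(T)$, which I would establish by dyadically decomposing the $t$-integration: writing $I=\sum_k I_k$ with $I_k\sim 2^{-kq\delta}\int_{|t|\sim 2^k}R(t)\,dt$, I would bound $\int_{|t|\sim 2^k}R$ by a tube-covering argument relating the mass of $\tilde w\otimes\tilde w$ on a diagonal strip of width $2^k$ to the $\mbb T_1$ tube supremum, and then sum over $k$ using the hypothesis $q\delta>1$.

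The main obstacle will be the last step.  The autocorrelation $R$ encodes the behavior of $\tilde w\otimes\tilde w$ along one diagonal, while tubes in $\mbb T_1$ are aligned with the orthogonal antidiagonal, so reconciling the two requires a careful analysis of the diagonal strip's decomposition into tubes without losing factors depending on the support of $\tilde w$.  The threshold $q>1/\delta$ emerges sharply as the condition for the resulting dyadic geometric series to converge.
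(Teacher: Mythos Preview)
Your approach is quite different from the paper's: the paper introduces a bespoke ``fractal dimension'' functional $\mbb A_\alpha(H)$ on $\mbb R^4$, proves a low-dimensional restriction estimate for $\alpha<\delta$ via a level-set argument, and then bootstraps to $\alpha=1$ through a dimensional maximal-function inequality before finally bounding $\mbb A_1$ of a tensor weight by the tube supremum. Your route through $TT^*$ plus a single H\"older step is more direct, but the H\"older step is fatally lossy, and the ``main obstacle'' you flag at the end is not merely a technicality --- it is a genuine gap.

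Concretely, after your reduction you must show $I=\int R(t)(1+|t|)^{-q\delta}\,dt \lct \sup_{T\in\mbb T_1}w(T)$, where $R$ is the autocorrelation of $\tilde w$ and $\mbb T_1$ consists of \emph{antidiagonal} tubes. But $I\geq\int_{-1}^1 R(t)\,dt$, which is the mass of $\tilde w\otimes\tilde w$ on a unit \emph{diagonal} strip, i.e.\ essentially $\sup_{T\in\mbb T_{-1}}w(T)$. These two tube suprema are not comparable: take $\tilde w=\sum_{a\in A}\chi_{[a,a+1/2]}$ with $A\subset\mbb Z$ a Sidon set of size $M$. Then $I\gct R(0)\sim M$, while the Sidon property forces $\sup_{T\in\mbb T_1}w(T)=O(1)$ since each sum $a+b$ has at most two representations. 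So the inequality you need is false in general. Your dyadic tube-covering idea does work, but it controls $\int_{|t|\sim 2^k}R$ by $2^k\sup_{T\in\mbb T_{-1}}w(T)$ (diagonal tubes parallel to the strip), yielding only the weaker conclusion with $\mbb T_{-a/b}$ in place of $\mbb T_{a/b}$; since $\sup_{\mbb T_{-1}}\geq\sup_{\mbb T_1}$ always (by Cauchy--Schwarz on the convolution), this does not recover the theorem. The underlying reason is that your H\"older application replaces the operator norm of $TT^*$ by a Hilbert--Schmidt-type kernel norm $\big(\iint w(x)w(y)|\widehat\sigma(x-y)|^q\big)^{1/q}$, which can be arbitrarily larger; the paper's interpolation machinery avoids exactly this loss.
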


Condition (\ref{tensor}) on $w$ is perhaps the `opposite' of the radial 
condition of \cite{brv:helmholtz} and \cite{cs:fourinv}. However, when 
$\sigma$ is arc length measure on the unit circle $\mbb S^1$, the decay
exponent of $\widehat{\sigma}$ (as given in (\ref{decayjone})) will be 
$\delta=1/4$, and hence Theorem \ref{mainjone} will only give us 
(\ref{starjone}) for $q > 4$ (see also (\ref{globalmtinf}) in the next 
section), whereas in the radial case \cite{brv:helmholtz} and 
\cite{cs:fourinv} tell us that (\ref{starjone}) (with 
$\sup_{T \in \mbb T_{a/b}} w(T)$ replaced by $\sup_T w(T)$) is true for 
$q \geq 2$.  

For $v \in \mbb R^2$ with $|v|=1$, we denote by $\mbb T_v$ the set of all 
1-tubes in $\mbb R^2$ whose core lines are perpendicular to $v$. 

\begin{thm}
\label{mainjtwo}
Suppose $0 < \delta \leq 1$, $v \in \mbb R^2$, $|v|=1$, and $\sigma$ is a 
finite Borel measure on $\mbb R^2$ whose Fourier transform obeys the decay 
estimate
\begin{equation}
\label{decayjtwo}
|\widehat{\sigma}(x)| \leq \frac{C}{|x \cdot v|^\delta} 
\end{equation} 
if $|x| \geq 1$. Also, suppose that $w : \mbb R^2 \to [0,\infty)$ is
Lebesgue measurable. Then, for $q > 2/\delta$, we have
\begin{equation}
\label{starjtwo}
\int |Ef(x)|^q w(x) dx 
\lct \big( \sup_{T \in \mbb T_v} w(T) \big) \| f \|_{L^2(\sigma)}^q 	
\end{equation}
for all $f \in L^2(\sigma)$. {\rm(}The implicit constant in {\rm 
(\ref{starjtwo})} \!\!\! depends only on $C$, $\delta$, $q$, and 
$\| \sigma \|$.{\rm )}
\end{thm}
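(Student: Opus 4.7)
After rotating coordinates, I may assume $v=e_1$, so that $\mbb T_v$ consists of vertical strips of width one and the hypothesis reads $|\widehat{\sigma}(x_1,x_2)|\leq C|x_1|^{-\delta}$ whenever $|x|\geq 1$; combined with $|\widehat{\sigma}|\leq \|\sigma\|$, this gives the uniform bound $|\widehat{\sigma}(x)|\lct K(x_1)$ with $K(t):=\min(1,|t|^{-\delta})$. The plan is to pass to the dual estimate via $TT^*$-style duality: since
\[
\int (Ef)(x)\,g(x)\,w(x)\,dx = \int f(\xi)\,\widehat{gw}(\xi)\,d\sigma(\xi),
\]
Cauchy--Schwarz on $L^2(\sigma)$ together with $L^{q}(w)$--$L^{q'}(w)$ duality reduces (\ref{starjtwo}) to the single bilinear inequality
\[
\|\widehat{gw}\|_{L^2(\sigma)}^2 \lct M^{2/q}\,\|g\|_{L^{q'}(w)}^2,\qquad M:=\sup_{T\in\mbb T_v}w(T),
\]
uniformly in $g\in L^{q'}(w\,dx)$.

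Expanding the left-hand side and inserting the decay bound turns the task into estimating $\iint K(x_1-y_1)|g(x)|w(x)|g(y)|w(y)\,dx\,dy$. Because $K$ depends only on the horizontal separation, Fubini in the vertical coordinates collapses the integral to the one-dimensional bilinear form $\iint K(x_1-y_1)A(x_1)A(y_1)\,dx_1\,dy_1$, where $A(t):=\int|g(t,s)|w(t,s)\,ds$. Hölder in $s$ gives the pointwise factorization $A(t)\leq G(t)\,W(t)^{1/q}$, where $G(t):=\bigl(\int|g(t,s)|^{q'}w(t,s)\,ds\bigr)^{1/q'}$ (so that $\|G\|_{L^{q'}(\mbb R)}=\|g\|_{L^{q'}(w)}$) and $W(t):=\int w(t,s)\,ds$. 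Crucially, the tube hypothesis transfers to the one-dimensional Morrey-type bound $\int_I W\leq M$ on every unit interval $I\subset\mbb R$, and this is the only quantitative information the argument retains about the weight $w$.

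The heart of the proof---and the main obstacle---is the dyadic handling of this 1-D form, which is exactly where $q>2/\delta$ enters. Partition $\mbb R$ into unit intervals $I_n=[n,n+1)$ and set $P(n):=\int_{I_n}G\,W^{1/q}$; Hölder inside each $I_n$ together with $\int_{I_n}W\leq M$ gives $P(n)\leq M^{1/q}\|G\|_{L^{q'}(I_n)}$, and therefore $\|P\|_{\ell^{q'}(\mbb Z)}\leq M^{1/q}\|g\|_{L^{q'}(w)}$. Decomposing $K$ into annular pieces supported where $|t|\sim 2^j$, on which $K\lct 2^{-j\delta}$, the scale-$2^j$ contribution to the bilinear form is controlled by $2^{-j\delta}\sum_{|n-m|\leq C2^j}P(n)P(m)$; discrete Young's inequality on $\mbb Z$ with exponent $q/2$ (legitimate since $q\geq 2$) then estimates the inner sum by $C\,2^{2j/q}\|P\|_{\ell^{q'}}^2$. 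Summing over $j\geq 0$ produces the geometric series $\sum_{j\geq 0}2^{j(2/q-\delta)}$, which converges precisely when $q>2/\delta$---the exact hypothesis of the theorem. Once this series converges, collecting the bounds delivers $\|\widehat{gw}\|_{L^2(\sigma)}^2\lct M^{2/q}\|g\|_{L^{q'}(w)}^2$, and the theorem follows.
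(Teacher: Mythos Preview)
Your argument is correct and takes a genuinely different route from the paper's. The paper builds an abstract fractal-dimension framework: it defines a tube-based functional $\mathcal A_\alpha(H)$ and a ``dimensional maximal function'' $\mathcal M F(\alpha)$, proves a monotonicity inequality $\mathcal M F(\alpha)\leq \mathcal M F(\beta)$ for $\beta<\alpha$, establishes a low-dimensional weak-type estimate $\mathcal Q(\beta,2)\leq 2$ for all $\beta<\delta$ via a level-set/$TT^*$ argument, and then uses the monotonicity to bootstrap to $\alpha=1$, obtaining $\mathcal Q(1,2)\leq 2/\delta$; finally it checks $\mathcal A_1(w/\|w\|_{L^\infty})\lct \|w\|_{L^\infty}^{-1}\sup_{T\in\mbb T_v}w(T)$. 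Your proof bypasses all of this machinery: after the $TT^*$ duality reduction, you exploit the one-directionality of the decay to collapse the bilinear form to a one-dimensional convolution in the $v$-coordinate, and then a single H\"older/Young computation on unit intervals produces the geometric series $\sum_j 2^{j(2/q-\delta)}$ whose convergence is \emph{exactly} the hypothesis $q>2/\delta$.

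What each approach buys: your argument is shorter, more elementary, and makes transparent that the threshold $q>2/\delta$ arises from balancing the kernel decay $2^{-j\delta}$ against the Young exponent $2^{2j/q}$; it also never needs $w\in L^\infty$. The paper's approach, on the other hand, isolates a reusable ``dimension-upgrade'' principle (the maximal-function inequality) that applies uniformly across several settings---it is the same mechanism used for Theorem~\ref{mainjone} and in the author's earlier work, so the infrastructure pays for itself when treating multiple decay hypotheses in parallel. Minor technical remarks: you should note that compactly supported $g$ suffice by density so that $\widehat{gw}$ is well-defined, and the Young step needs $q/2\geq 1$, which is guaranteed since $q>2/\delta\geq 2$.
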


As a consequence of Theorem \ref{mainjtwo}, we obtain the following result
concerning the extension operator associated with convex curves in the 
plane.

\begin{coro}
\label{convex}
Suppose $\gamma : [0,c] \to \mbb R$ satisfies the following properties: 
$\gamma$ and $\gamma'$ are convex with $\gamma(0)=\gamma'(0)=0$, 
$\gamma \in C^2((0,c]) \cap C^3((0,c))$, $(\gamma''^{1/2}/\gamma')' \leq 0$,
and $\gamma(t) \gamma''(t) / \gamma'(t)^2 \leq C$ for all $0 < t \leq c$.
Let $\sigma$ be the measure on the curve $(t,\gamma(t))$ given by 
$d\sigma(t)= \gamma''(t)^{1/2} dt$. Then, for $q > 4$, we have
\begin{displaymath}
\int |Ef(x)|^q w(x) dx 
\lct \big( \sup_{T \in \mbb T_{(0,1)}} w(T) \big) \| f \|_{L^2(\sigma)}^2 
\end{displaymath}
for all $f \in L^2(\sigma)$ and Lebesgue measurable 
$w : \mbb R^2 \to [0,\infty)$.
\end{coro}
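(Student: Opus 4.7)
The plan is to apply Theorem \ref{mainjtwo} with $v = (0,1)$ and $\delta = 1/2$; since $q > 4 = 2/\delta$ is precisely the range in Theorem \ref{mainjtwo}, the corollary reduces to verifying the Fourier decay estimate
\begin{equation*}
|\widehat{\sigma}(x_1, x_2)| \leq C\,|x_2|^{-1/2} \qquad \text{for } |(x_1,x_2)| \geq 1.
\end{equation*}
For $|x_2| \leq 1$ this is automatic, since $|\widehat{\sigma}(x)| \leq \|\sigma\| \leq \|\sigma\|\,|x_2|^{-1/2}$, so the substantive case is $|x_2| \geq 1$.

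In that regime I would analyze the oscillatory integral
\begin{equation*}
\widehat{\sigma}(x_1,x_2) = \int_0^c e^{-2\pi i(x_1 t + x_2\gamma(t))}\,\gamma''(t)^{1/2}\,dt
\end{equation*}
by stationary phase. With $\phi(t) = x_1 t + x_2 \gamma(t)$, a unique stationary point $t_0 \in [0,c]$ exists precisely when $-x_1/x_2 \in [0,\gamma'(c)]$ (recall $\gamma'$ is an increasing bijection from $[0,c]$ onto $[0,\gamma'(c)]$), and there $\phi''(t_0) = x_2\gamma''(t_0)$. The amplitude $\gamma''(t)^{1/2}$ is calibrated so that the formal stationary-phase contribution
\begin{equation*}
\sim\ \gamma''(t_0)^{1/2} \cdot |x_2 \gamma''(t_0)|^{-1/2}\ =\ |x_2|^{-1/2}
\end{equation*}
already has the required size, independent of $t_0$. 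To turn this heuristic into a proof I would dyadically partition $[0,c]$ around $t_0$ (or partition by the size of $|\phi'|$ when no stationary point lies in $[0,c]$) and apply the van der Corput lemma with amplitude, $|\int_I e^{-2\pi i\phi}\psi\,dt| \lct \lambda^{-1/2}(\|\psi\|_{L^\infty(I)} + \|\psi'\|_{L^1(I)})$, on each dyadic piece $I$ where $|\phi''| \gct \lambda$.

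The hypothesis $(\gamma''^{1/2}/\gamma')'\leq 0$ furnishes the monotonicity I need to control $\|(\gamma''^{1/2})'\|_{L^1(I)}$ across the dyadic scales (the amplitude being tied explicitly to $\gamma'$), and the hypothesis $\gamma(t)\gamma''(t)/\gamma'(t)^2 \leq C$ plays a doubling-type role: it lets one compare $\gamma''(t)$ with $\gamma''(t_0)$ on a neighborhood of $t_0$ large enough to absorb the stationary-phase envelope. I expect the main obstacle to be exactly the regime where $t_0$ is close to the endpoint $0$: for exponentially flat curves such as $\gamma(t) = e^{-1/t^m}$, the value $\gamma''(t_0)$ is extremely small and the natural envelope width $|x_2\gamma''(t_0)|^{-1/2}$ may exceed $t_0$ itself, so the stationary point effectively ``leaks'' into the endpoint. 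Verifying that the two structural conditions on $\gamma$ really do suffice to salvage the uniform bound $|x_2|^{-1/2}$ in this near-endpoint regime is where the technical content of the argument lies.
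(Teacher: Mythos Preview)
Your strategy is exactly the paper's: reduce to the decay estimate $|\widehat{\sigma}(x_1,x_2)| \lct |x_2|^{-1/2}$ uniformly in $x_1$, and then apply Theorem \ref{mainjtwo} with $v=(0,1)$ and $\delta=1/2$. The only difference is that the paper does not reprove the decay estimate; it simply invokes \cite[Proposition 3.1]{cz:flat}, which is precisely the statement that under the listed hypotheses on $\gamma$ one has $|\widehat{\sigma}(x_1,x_2)| \lct |x_2|^{-1/2}$. Your stationary-phase outline is along the right lines and is essentially what Carbery--Ziesler carry out, but as you yourself acknowledge, the near-endpoint regime (where $\gamma''$ can degenerate) is where all the work lies, and you have only indicated the shape of the argument rather than executed it. So the proposal is correct but incomplete as a self-contained proof; citing \cite{cz:flat} closes the gap.
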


\begin{proof}
We know from \cite[Proposition 3.1]{cz:flat} that 
$|\widehat{\sigma}(x_1,x_2)| \lct |x_2|^{-1/2}$ for all $x_2$ (uniformly in 
$x_1$), so the corollary follows from applying Theorem \ref{mainjtwo} with 
$\delta=1/2$ and $v=(0,1)$. 
\end{proof}

We note that the convex curves $(t,e^{-1/t^m})$, $0 \leq t \leq c_m$, 
$m \in \mbb N$, which are exponentially flat at the origin, fall within the 
scope of Corollary \ref{convex}.

We also note that if the curve is completely flat and $\sigma$ is arc length
measure on it, then the estimate of Corollary \ref{convex} is true for all 
$q > 2$, but with $\mbb T_{(0,1)}$ changed appropriately: if $v$ is a unit 
vector that is parallel to the curve, then, for $q > 2$, we have
\begin{equation}
\label{starflat}
\int |Ef(x)|^q w(x) dx 
\lct \big( \sup_{T \in \mbb T_v} w(T) \big) \| f \|_{L^2(\sigma)}^q 	
\end{equation}
for all $f \in L^2(\sigma)$ and $w : \mbb R^2 \to [0,\infty)$. It is not 
hard to prove (\ref{starflat}) (even for $q \geq 2$ and for any compact flat 
hypersurface in $\mbb R^n$, $n \geq 2$) directly without needing any 
information about $\widehat{\sigma}$, but it can also be easily obtained 
from Theorem \ref{mainjtwo} as follows.
We can take $\gamma(t)=0$, $0\leq t \leq 1$, so that $S$ is the line segment
$[0,1] \times \{ 0 \} \subset \mbb R^2$. Then 
\begin{displaymath}
|\widehat{\sigma}(x_1,x_2)| 
= \Big| \int_0^1 e^{-2 \pi i x_1 t} dt \Big| \lct \frac{1}{|x_1|} 
\end{displaymath}
for all $(x_1,x_2) \in \mbb R^2$, and (\ref{starflat}) follows by applying 
Theorem \ref{mainjtwo} with $v=(1,0)$ and $\delta = 1$.

So, while non-weighted restriction estimates (such as Stein-Tomas) require 
$\widehat{\sigma}$ to decay uniformly in all directions, Mizohata-Takeuchi 
estimates can hold by exploiting the decay of $\widehat{\sigma}$ in a single 
direction. (See also Subsection 2.2 below.)  

Theorems \ref{mainjone} and \ref{mainjtwo} were motivated by the author's 
recent paper \cite{pems201030}. This will be discussed in the next section.
The subsequent sections will then be dedicated to the proofs of Theorems 
\ref{mainjone} and \ref{mainjtwo} by following the strategy of
\cite{pems201030}.

\section{The motivation behind Theorems \ref{mainjone} and \ref{mainjtwo}}

Suppose $n \geq 1$ and $0 < \alpha \leq n$. Following \cite{plms12046} and 
\cite{pems201030}, for Lebesgue measurable functions 
$H: \mbb R^n \to [0,1]$, we define
\begin{displaymath}
A_\alpha(H)= \inf \Big\{ C : \int_{B(x_0,R)} H(x) dx \leq C R^\alpha
\mbox{ for all } x_0 \in \mbb R^n \mbox{ and } R \geq 1 \Big\},
\end{displaymath}
where $B(x_0,R)$ denotes the ball in $\mbb R^n$ of center $x_0$ and radius 
$R$. We say $H$ is a {\it weight of fractal dimension} $\alpha$ if 
$A_\alpha(H) < \infty$. We note that $A_\beta(H) \leq A_\alpha(H)$ if 
$\beta \geq \alpha$, so the phrase ``$H$ is a weight of fractal dimension 
$\alpha$'' is not meant to assign a dimension to the function $H$, rather, 
it is just another way for saying that $A_\alpha(H) < \infty$.

Suppose $n \geq 2$, the surface $S \subset \mbb R^n$ has a nowhere vanishing 
Gaussian curvature, and $\sigma$ is surface measure on $S$. In 
\cite{pems201030} (see the $\alpha = n/2$ case of 
\cite[Theorem 2.1]{pems201030}), it was proved that the restriction 
estimate\footnote{In \cite[Theorem 2.1]{pems201030}, $S$ was assumed to have 
a strictly positive second fundamental form, but the third paragraph 
following the statement of \cite[Theorem 2.1]{pems201030} explains that it 
suffices to assume that $S$ has a nowhere vanishing Gaussian curvature when 
$\alpha \leq n/2$.}
\begin{equation}
\label{knownweighted}
\int |Ef(x)|^q H(x) dx \lct A_{n/2}(H) \| f \|_{L^2(\sigma)}^q 
\end{equation}
holds for all functions $f \in L^2(\sigma)$ and weights $H$ on $\mbb R^n$ of 
dimension $n/2$ whenever $q > 2n/(n-1)$.

In dimension $n=2$, and for exponents $q > 4$, (\ref{knownweighted}) implies
that
\begin{equation}
\label{globalmtinf}
\int |Ef(x)|^q w(x) dx \lct \inf_{v \in \mbb S^1}
\big( \sup_{T \in \mbb T_v} w(T) \big) \| f \|_{L^2(\sigma)}^q 
\end{equation}
for all $f \in L^2(\sigma)$, as we shall see in the next subsection.

By H\"{o}lder's inequality and the inequality (\ref{forcs}) of the next
subsection, (\ref{globalmtinf}) gives the following improvement on 
(\ref{localmt}): 
\begin{equation}
\label{localmtinf}
\int_{|x| \leq R} |Ef(x)|^2 w(x) dx \leq C_\epsilon R^\epsilon R^{1/2} 
\inf_{v \in \mbb S^1} \big( \sup_{T \in \mbb T_v} w(T) \big) 
\| f \|_{L^2(\sigma)}^2.
\end{equation}

\subsection{Proof that (\ref{knownweighted}) implies (\ref{globalmtinf})} 

We may assume that $\| w \|_{L^\infty} \leq 1$. Let $v \in \mbb S^1$. If 
$B_R$ is a ball in $\mbb R^2$ of radius $R \geq 1$, then, covering $B_R$ by 
tubes $T_1, \ldots, T_N \in \mbb T_v$ having disjoint interiors, we see that  
\begin{displaymath}
\int_{B_R} w(x) dx = \sum_{l=1}^N \int_{B_R \cap T_l} w(x) dx
\leq N \sup_{T_l} w(T_l) \lct \big( \sup_{T \in T_v} w(T) \big) R.
\end{displaymath}
Thus $w$ is a weight on $\mbb R^2$ of fractal dimension $\alpha=1$, and
\begin{displaymath}
A_1(w) \lct \sup_{T \in \mbb T_v} w(T).
\end{displaymath}
Since this is true for every unit vector $v$, it follows that
\begin{equation}
\label{forcs}
A_1(w) \lct \inf_{v \in \mbb S^1} \big( \sup_{T \in \mbb T_v} w(T) \big).
\end{equation}
Applying (\ref{knownweighted}) with $H=w$ and $n=2$, we obtain 
(\ref{globalmtinf}). 

\subsection{Proof of (\ref{globalmtinf}) via Theorem \ref{mainjtwo}}

Since everything in this section is done under the assumption that $S$ has a
nowhere vanishing Gaussian curvature, we have the following decay estimate 
on the Fourier transform of the arc length measure $\sigma$ on $S$:
\begin{displaymath}
|\widehat{\sigma}(x)| \lct \frac{1}{|x|^{1/2}} 
\lct \frac{1}{|x \cdot v|^{1/2}}
\end{displaymath}
for all $x \in \mbb R^2$ and $v \in \mbb S^1$. Therefore, we can apply 
Theorem \ref{mainjtwo} with $\delta = 1/2$ to get (\ref{starjtwo}) for all 
$q > 4$ and $v \in \mbb S^1$, and (\ref{globalmtinf}) immediately follows.
 
\subsection{Strategy of \cite{pems201030}}

Most of the recent progress on the restriction problem in harmonic analysis 
relied on Guth's polynomial partitioning method that he introduced in
\cite{guth:poly}, and which was subsequently used in several papers (see
\cite{plms12046}, \cite{guth:poly2}, \cite{ghi:variable}, \cite{hr:highdim}, 
\cite{dgowwz:falconer}, \cite{ow:cone}, \cite{42over13}, and 
\cite{bs:improved}) to obtain new results on the restriction problem in both 
the weighted and non-weighted settings.

Guth's polynomial partitioning method upgrades restriction estimates from 
low `algebraic dimensions' to higher ones. The main innovation of 
\cite{pems201030} is a method for upgrading restriction estimates from low 
fractal dimensions to higher ones. 

For example, due to the decay estimate 
$|\widehat{\sigma}(x)| \lct |x|^{-(n-1)/2}$, (\ref{knownweighted}) is easy 
to prove in low fractal dimensions (more precisely, in the regime 
$0 < \alpha < (n-1)/2$). Since we are interested in the estimate at dimension 
$\alpha=n/2$, it becomes crucial to have a mechanism that will allow us to 
upgrade restriction estimates from low fractal dimensions to higher ones.  

This was done in \cite{pems201030} through a certain dimensional 
maximal-function that was defined as
\begin{displaymath}
M F(\alpha) = 
\Big( \sup_H \frac{1}{A_\alpha(H)} \int F(x)^\alpha H(x)dx \Big)^{1/\alpha},
\end{displaymath}
where $H$ ranges over all weights on $\mbb R^n$ that obey the condition 
$0 < A_\alpha(H) < \infty$, and which was shown to satisfy the following 
inequality.

\begin{alphthm}[{\cite[Theorem 1.1]{pems201030}}]
\label{dimineqzero}
Suppose $n \geq 1$ and $0 < \beta < \alpha \leq n$. Then
\begin{displaymath}
M F(\alpha) \leq M F(\beta)
\end{displaymath}
for all non-negative Lebesgue measurable functions $F$ on $\mbb R^n$.
\end{alphthm}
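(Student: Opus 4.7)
My plan is to produce, for each weight $H$ of dimension $\alpha$, a specific test weight for $MF(\beta)$, and then to combine the resulting inequality with a H\"older estimate on balls into a closed self-improvement inequality that can be absorbed. The key insight is that $w := F^{\alpha - \beta} H$ is (a scalar multiple of) an admissible weight for $MF(\beta)$, and that the extra factor $F^{\alpha-\beta}$ supplies exactly the dimensional bookkeeping needed to turn the $\alpha$-dimensional bound on $H$ into a $\beta$-dimensional bound on $w$ via H\"older's inequality.

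I will assume $MF(\beta) < \infty$ (otherwise the conclusion is trivial) and fix $H : \mbb R^n \to [0,1]$ with $0 < A_\alpha(H) < \infty$. By monotone convergence applied to the truncations $F_k := \min(F,k) \mathbf{1}_{B(0,k)}$ (using $F_k \leq F \Rightarrow MF_k(\beta) \leq MF(\beta)$), I may reduce to the case where $F$ is bounded, say $F \leq M$, and compactly supported, so that $X := \int F^\alpha H\,dx$ is finite. Since $0 \leq F \leq M$ and $0 \leq H \leq 1$, the function $w/M^{\alpha-\beta}$ takes values in $[0,1]$ and is therefore admissible in the definition of $MF(\beta)$; rescaling by $M^{\alpha-\beta}$ gives
\begin{displaymath}
X = \int F^\beta\, w\,dx \leq A_\beta(w)\, MF(\beta)^\beta.
\end{displaymath}

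Next I will bound $A_\beta(w)$ using H\"older's inequality on each ball $B = B(x_0, R)$ with $R \geq 1$, taking the measure to be $H\,dx$ and the conjugate exponents $p = \alpha/(\alpha-\beta)$ and $p' = \alpha/\beta$:
\begin{displaymath}
\int_B w\,dx
= \int_B F^{\alpha-\beta}\, H\,dx
\leq \Big( \int_B F^\alpha H\,dx \Big)^{(\alpha-\beta)/\alpha} H(B)^{\beta/\alpha}
\leq X^{(\alpha-\beta)/\alpha} A_\alpha(H)^{\beta/\alpha}\, R^\beta.
\end{displaymath}
Dividing by $R^\beta$ and taking the supremum over $B$ gives $A_\beta(w) \leq X^{(\alpha-\beta)/\alpha} A_\alpha(H)^{\beta/\alpha}$.

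Plugging this back into the previous display yields $X \leq X^{(\alpha-\beta)/\alpha} A_\alpha(H)^{\beta/\alpha} MF(\beta)^\beta$; since $X$ is finite, dividing by $X^{(\alpha-\beta)/\alpha}$ (the case $X=0$ being trivial) and raising to the power $\alpha/\beta$ produces the target $X \leq A_\alpha(H) MF(\beta)^\alpha$. Taking the supremum over $H$ and undoing the truncation will then complete the proof. The only real conceptual obstacle is spotting the right sacrificial weight: once one recognizes that $F^{\alpha-\beta} H$ is almost a test weight for $MF(\beta)$ and that the H\"older exponent $\beta/\alpha$ applied to $H(B) \leq A_\alpha(H) R^\alpha$ produces exactly the desired $R^\beta$ growth, the rest is a one-line absorption.
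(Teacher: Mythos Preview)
Your proof is correct and, in fact, more direct than the argument the paper (and the original reference \cite{pems201030}) uses. The paper's approach, visible in its proof of the analogous Theorem \ref{maxineqone}, is an iterative bootstrap: starting from a crude bound $\int F_N^{\beta_0} H\,dx \leq C_0\,A_\alpha(H)$ with $\beta_0=1$, one applies H\"older on balls to show that $F_N^{\beta_{k-1}/p} H$ (with $p=\alpha/(\alpha-\beta)$) is a weight of dimension $\beta$, feeds this into the definition of $MF(\beta)$ to obtain an improved exponent $\beta_k = \beta + \beta_{k-1}/p$, and iterates; the sequence $\beta_k$ converges to $\alpha$ and the constants converge to $MF(\beta)^\alpha$, after which Fatou's lemma finishes the job. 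Your argument short-circuits this iteration by jumping straight to the limiting weight: since $\beta_{k-1}/p \to \alpha-\beta$, the weights $F_N^{\beta_{k-1}/p} H$ converge to your choice $w = F^{\alpha-\beta} H$, and the resulting self-referential inequality $X \leq X^{(\alpha-\beta)/\alpha} A_\alpha(H)^{\beta/\alpha} MF(\beta)^\beta$ is precisely the fixed point of the bootstrap. What your approach buys is economy: one H\"older application and one absorption step replace an infinite iteration plus a limit argument. What the iterative approach buys is perhaps a clearer view of \emph{why} the exponent $\alpha-\beta$ is the right one, since it emerges as the limit rather than being guessed at the outset.
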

 
To prove Theorem \ref{mainjone} of this paper, we first define a suitable
notion of fractal dimension that will allow us to use the decay estimate
(\ref{decayjone}) to establish a low-dimensional version of 
(\ref{starjone}). We then define a suitable dimensional maximal function and 
prove an inequality (as in Theorem \ref{dimineqzero}) that will allow us to 
upgrade the low-dimensional version of (\ref{starjone}) to (\ref{starjone})
itself.

We follow the same strategy to prove Theorem \ref{mainjtwo}, which also 
needs its own notion of fractal dimension and dimensional maximal function.

\section{Proof of Theorem \ref{mainjone}}

\subsection{The dimensional maximal function}

Given a point $x=(x_1,x_2) \in \mbb R^2$ and numbers $R_1, R_2 \geq 1$, we 
define the set $\mbb B(x,R_1,R_2)$ to be the following subset of $\mbb R^4$:
\begin{displaymath}
\{ (y,z)=(y_1,y_2,z_1,z_2) \in \mbb R^4 : |y_1+z_1-x_1| \leq R_1 
   \mbox{ and } |y_2+z_2-x_2| \leq R_2 \}.
\end{displaymath}

Suppose $0 < \alpha \leq 2$ and $H : \mbb R^2 \to [0,1]$ is a Lebesgue 
measurable function. We define $\mbb A_\alpha(H)$ to be the {\em square 
root} of
\begin{displaymath}
\inf \Big\{ C : \int_{\mbb B(x,R_1,R_2)} H(y) H(z) d(y,z) 
                \leq C (R_1 R_2)^\alpha 
	    \mbox{ for all } x \in \mbb R^2 \mbox{ and } R_1, R_2 \geq 1 \Big\},
\end{displaymath}
where $d(y,z)$ is Lebesgue measure on $\mbb R^4$. We say $H$ is a {\it 
weight of fractal dimension} $\alpha$ if $\mbb A_\alpha(H) < \infty$. Same 
as with the definition of $A_\alpha(H)$ in the previous section, here, we 
also have $\mbb A_\beta(H) \leq \mbb A_\alpha(H)$ if $\beta \geq \alpha$, 
and the phrase ``$H$ is a weight of fractal dimension $\alpha$'' only means 
that $\mbb A_\alpha(H) < \infty$.

Also, for Lebesgue measurable $F : \mbb R^2 \to [0, \infty)$, we define
\begin{displaymath}
\mbb M F(\alpha) = \Big( \sup \frac{1}{\mbb A_\alpha(H)} 
                         \int F(x)^\alpha H(x) dx \Big)^{1/\alpha},
\end{displaymath}
where the sup is taken over all $H$ that obey the condition 
$0 < \mbb A_\alpha(H) < \infty$.

The following theorem and its corollary were motivated by 
\cite[Theorem 1.1]{pems201030} and \cite[Corollary 2.1]{pems201030}.

\begin{thm}
\label{maxineqone}
Suppose $0 < \beta < \alpha \leq 2$. Then
\begin{displaymath}
\mbb M F(\alpha) \leq \mbb M F(\beta)
\end{displaymath}
for all non-negative Lebesgue measurable functions $F$ on $\mbb R^2$.
\end{thm}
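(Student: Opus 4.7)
I plan to mirror the strategy used to prove Theorem~\ref{dimineqzero} (\cite[Theorem 1.1]{pems201030}), adapted to the bilinear fractal quantity $\mbb A_\alpha$. Fix a Lebesgue measurable $F : \mbb R^2 \to [0, \infty)$ and a weight $H : \mbb R^2 \to [0,1]$ with $0 < \mbb A_\alpha(H) < \infty$; it suffices to show that
\begin{displaymath}
\int F(x)^\alpha H(x)\, dx \lct \mbb A_\alpha(H)\, \mbb M F(\beta)^\alpha.
\end{displaymath}

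First I decompose $F$ via its super-level sets $\Omega_t = \{x : F(x) > t\}$, writing $\int F^\alpha H = \alpha \int_0^\infty t^{\alpha-1} H(\Omega_t)\, dt$. For each $t > 0$, the localized weight $H_t := H \chi_{\Omega_t}$ satisfies $\int F^\beta H_t \geq t^\beta H(\Omega_t)$, and the definition of $\mbb M F(\beta)$ then gives
\begin{displaymath}
H(\Omega_t) \leq t^{-\beta}\, \mbb A_\beta(H_t)\, \mbb M F(\beta)^\beta.
\end{displaymath}

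The crux is to bound $\mbb A_\beta(H_t)$ by a product of $\mbb A_\alpha(H)$ and $H(\Omega_t)$ raised to suitable powers. Two complementary bilinear estimates on $H_t$ can be interpolated: first, $H_t \leq H$ on both factors gives
\begin{displaymath}
\int_{\mbb B(x, R_1, R_2)} H_t(y) H_t(z)\, d(y,z) \leq \mbb A_\alpha(H)^2 (R_1 R_2)^\alpha,
\end{displaymath}
whose ratio with $(R_1R_2)^\beta$ grows at large scales; second, pairing $H_t \leq \chi_{\Omega_t}$ on one factor with the linear consequence that $\int_B H \lct \mbb A_\alpha(H)(R_1R_2)^{\alpha/2}$ for any rectangular box $B$ of dimensions $R_1 \times R_2$ (derivable from the bilinear hypothesis via Cauchy--Schwarz applied to $(\int_B H)^2 = \int\int \chi_B(y)\chi_B(z) H(y)H(z)$) yields a complementary estimate that is stronger at large scales. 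Optimizing these two bounds over the product $R_1 R_2$ should give $\mbb A_\beta(H_t) \lct \mbb A_\alpha(H)^{\beta/\alpha}\, H(\Omega_t)^{(\alpha-\beta)/\alpha}$, which, substituted back, produces the self-improving distributional bound
\begin{displaymath}
H(\Omega_t) \lct \mbb A_\alpha(H)\, \mbb M F(\beta)^\alpha\, t^{-\alpha}.
\end{displaymath}

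The main obstacle I anticipate is extracting the desired strong-type inequality from this distributional bound: the integral $\int_0^\infty t^{\alpha-1} \cdot t^{-\alpha}\, dt$ is logarithmically divergent, so the bound sits exactly at a critical scaling. I expect this to be overcome as in \cite{pems201030}---by first reducing to bounded compactly supported $F$, interpolating the distributional bound against an intermediate exponent $\gamma \in (\beta, \alpha)$, or running a second bootstrap that exploits the strict gap $\beta < \alpha$. Handling this endpoint step carefully, together with the two-parameter scale optimization imposed by the anisotropy of the box $\mbb B(x, R_1, R_2)$, is where the main technical work will be concentrated.
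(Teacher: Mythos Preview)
Your level-set approach gets you to the sharp weak-type bound
\begin{displaymath}
H(\Omega_t) \leq t^{-\alpha}\, \mbb A_\alpha(H)\, \mbb M F(\beta)^\alpha,
\end{displaymath}
and you correctly flag that this sits exactly at the critical exponent, so the layer-cake integral $\alpha\int_0^\infty t^{\alpha-1}H(\Omega_t)\,dt$ diverges logarithmically. The trouble is that none of your proposed repairs closes this gap within the level-set framework. Truncating $F$ to be bounded and compactly supported still leaves $\int_0^N t^{-1}\,dt$ divergent; and there is no second weak-type endpoint to interpolate against, because your argument with $\gamma\in(\beta,\alpha)$ in place of $\alpha$ would produce $H(\Omega_t)\leq t^{-\gamma}\mbb M F(\beta)^{\gamma}\,\mbb A_\gamma(H)$, and $\mbb A_\gamma(H)$ is not controlled by $\mbb A_\alpha(H)$ when $\gamma<\alpha$. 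Appealing to \cite{pems201030} does not help here: that paper does not pass through level sets at all, so ``as in \cite{pems201030}'' really means abandoning the decomposition you set up.

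The paper's proof uses a different mechanism. After normalising to $F_N\le 1$ with compact support, it applies H\"older \emph{inside the bilinear integral}
\begin{displaymath}
\int_{\mbb B(x,R_1,R_2)} F_N(y)^{\gamma}F_N(z)^{\gamma} H(y)H(z)\,d(y,z)
\leq \Big(\int F_N^{\beta_0} H\Big)^{2/p}\big(\mbb A_\alpha(H)^2(R_1R_2)^\alpha\big)^{1/p'},
\end{displaymath}
with $p'=\alpha/\beta$, so that the weight ${\mathcal H}=F_N^{\gamma}H$ has $\mbb A_\beta({\mathcal H})$ controlled by $\mbb A_\alpha(H)$ times a power of $\int F_N^{\beta_0}H$. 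Feeding ${\mathcal H}$ into the definition of $\mbb M F(\beta)$ and iterating produces a sequence of exponents $\beta_k\to\alpha$ with constants $C_k\to \mbb M F(\beta)^\alpha$. In effect, the H\"older step replaces your interpolation bound on $\mbb A_\beta(H_t)$ --- which loses information by passing to the $\{0,1\}$-valued cutoff $\chi_{\Omega_t}$ --- with a bound on $\mbb A_\beta(F_N^\gamma H)$ that retains the full profile of $F$ and therefore closes the bootstrap without a logarithmic loss. This H\"older-on-the-tensor-integral step is the idea missing from your outline; once you have it, the level-set decomposition is no longer needed.

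A secondary remark: your ``complementary estimate'' obtained by pairing $H_t\le\chi_{\Omega_t}$ with the linear bound $\int_B H\lct \mbb A_\alpha(H)(R_1R_2)^{\alpha/2}$ is only decreasing in $R_1R_2$ when $\beta>\alpha/2$. For the full range $0<\beta<\alpha$ you should instead interpolate the trivial bound $\int_{\mbb B}H_tH_t\le H(\Omega_t)^2$ against $\mbb A_\alpha(H)^2(R_1R_2)^\alpha$; this gives $\mbb A_\beta(H_t)\le \mbb A_\alpha(H)^{\beta/\alpha}H(\Omega_t)^{(\alpha-\beta)/\alpha}$ directly, without any restriction on $\beta$.
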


\begin{proof}
Let $F$ be a non-negative Lebesgue measurable functions $F$ on $\mbb R^2$.
For $N \in \mbb N$, we let $\chi_N$ be the characteristic function of the 
set
\begin{displaymath}
B(0,N) \cap \{ x \in \mbb R^2 : F(x) \leq N \}
\end{displaymath}
and $F_N= N^{-1} \chi_N F$. We also let $I = \int F_N(x) H(x) dx$. Since the 
tensor function $F_N(y) F_N(z)$ on $\mbb R^4$ is bounded by 1 and supported 
in the set
\begin{displaymath}
\{ (y,z)=(y_1,y_2,z_1,z_2) \in \mbb R^4 : |y_1|, |y_2|, |z_1|, |z_2| \leq N 
\},
\end{displaymath}
which is in turn contained in $\mbb B(0,2N,2N)$, it follows by the 
Fubini-Tonelli theorem that
\begin{eqnarray*}
I^2 
&  =   & \int F_N(y) F_N(z) H(y) H(z) d(y,z) \\
& \leq & \int_{\mbb B(0,2N,2N)} H(y) H(z) d(y,z) \\
& \leq & \mbb A_\alpha(H)^2 (2N)^{2\alpha},
\end{eqnarray*}
so that $I \leq 2^\alpha N^\alpha \mbb A_\alpha(H)$. Letting $\beta_0=1$ and 
$C_0= 2^\alpha N^\alpha$, we get
\begin{equation}
\label{reversezero}
\int F_N(x)^{\beta_0} H(x) dx \leq C_0 \mbb A_\alpha(H).
\end{equation}

For $x \in \mbb R^2$ and $R_1, R_2 \geq 1$, (\ref{reversezero}) and 
H\"{o}lder's inequality (applied with respect to the measure 
$H(y) H(z) d(y,z)$) tell us that
\begin{eqnarray*}
\lefteqn{\int_{\mbb B(x,R_1,R_2)} F_N(y)^{\beta_0/p} F_N(z)^{\beta_0/p} 
         H(y) H(z) d(y,z)} \\
& \leq & \Big( \int_{\mbb B(x,R_1,R_2)} F_N(y)^{\beta_0} F_N(z)^{\beta_0} 
               H(y) H(z) d(y,z) \Big)^{1/p} \\
&      & \times \Big( \int_{\mbb B(x,R_1,R_2)} H(y) H(z) d(y,z) \Big)^{1/p'} 
         \\
& \leq & \Big( \int F_N(x)^{\beta_0} H(x) dx \Big)^{2/p}  
         \Big( \mbb A_\alpha(H)^2 (R_1 R_2)^\alpha \Big)^{1/p'} \\
& \leq & C_0^{2/p} \mbb A_\alpha(H)^{(2/p)+(2/p')} (R_1 R_2)^{\alpha/p'} \\
&  =   & C_0^{2/p} \mbb A_\alpha(H)^2 (R_1 R_2)^{\alpha/p'},
\end{eqnarray*}
whenever $p, p' > 1$ are conjugate exponents. Choosing 
$p=\alpha/(\alpha-\beta)$, so that $\alpha / p' = \beta$, we conclude that
the function ${\mathcal H} : \mbb R^2 \to [0,1]$ defined by 
${\mathcal H}= F_N^{\beta_0/p} H$ is a weight on $\mbb R^2$ of fractal 
dimension $\beta$ with
\begin{displaymath}
\mbb A_\beta({\mathcal H}) \leq C_0^{1/p} \mbb A_\alpha(H).
\end{displaymath}
Therefore,
\begin{eqnarray*}
\int F_N(x)^\beta F_N(x)^{\beta_0/p} H(x) dx 
&   =  & \int F_N(x)^\beta {\mathcal H}(x) dx \\
& \leq & (\mbb M F_N(\beta))^\beta \mbb A_\beta({\mathcal H}) \\
& \leq & N^{-\beta} (\mbb M F(\beta))^\beta C_0^{1/p} \mbb A_\alpha(H),
\end{eqnarray*}
where we have used the fact that $F_N \leq N^{-1} F$ to conclude that
$\mbb M F_N(\beta) \leq N^{-1} \mbb M F(\beta)$. Letting
$M= N^{-\beta} \big( \mbb M F(\beta) \big)^\beta$, 
$\beta_1= \beta + (\beta_0/p)$, and $C_1 = M C_0^{1/p}$, we therefore have
\begin{equation}
\label{reverseone}
\int F_N(x)^{\beta_1} H(x) dx \leq C_1 \mbb A_\alpha(H).
\end{equation}

Now, letting $\beta_2 = \beta + (\beta_1/p)$ and $C_2 = M C_1^{1/p}$, and 
repeating the same procedure starting with (\ref{reverseone}) instead of 
(\ref{reversezero}), we arrive at
\begin{displaymath}
\int F_N(x)^{\beta_2} H(x) dx \leq C_2 \mbb A_\alpha(H).
\end{displaymath}
Therefore, proceeding in this fashion and using mathematical induction, we
obtain sequences $\{ \beta_k \}$ and $\{ C_k \}$, $k \geq 1$, so that
$\beta_k = \beta + (\beta_{k-1}/p)$ and $C_k = M C_{k-1}^{1/p}$ and
\begin{equation}
\label{reversekay}
\int F_N(x)^{\beta_k} H(x) dx \leq C_k \mbb A_\alpha(H).
\end{equation}

A simple calculation (see \cite[\S 5]{pems201030} for more details) reveals
that
\begin{displaymath}
\beta_k = \beta \frac{1-(1/p)^k}{1-(1/p)} + \frac{\beta_0}{p^k}
\hspace{0.25in} \mbox{ and } \hspace{0.25in}
C_k = M^{(1-(1/p)^k)/(1-(1/p))} C_0^{(1/p)^k},
\end{displaymath}
so that 
\begin{displaymath}
\lim_{k \to \infty} \beta_k = \frac{\beta}{1-(1/p)} = \alpha
\hspace{0.25in} \mbox{ and } \hspace{0.25in}
\lim_{k \to \infty} C_k = M^{\alpha/\beta} 
                        = N^{-\alpha} \big( \mbb M F(\beta) \big)^\alpha
\end{displaymath}
(recall that $p= \alpha / (\alpha - \beta)$).

Therefore, letting $k \to \infty$ in (\ref{reversekay}) and using Fatou's 
lemma, we see that
\begin{displaymath}
\int F_N(x)^\alpha H(x) dx 
\leq N^{-\alpha} \big( \mbb M F(\beta) \big)^\alpha.
\end{displaymath}
Recalling that $F_N= N^{-1} \chi_N F$, this becomes
\begin{displaymath}
\int \chi_N(x) F(x)^\alpha H(x) dx \leq \big( \mbb M F(\beta) \big)^\alpha.
\end{displaymath}

Therefore, letting $N \to \infty$, we see that
\begin{displaymath}
\int F(x)^\alpha H(x) dx \leq \big( \mbb M F(\beta) \big)^\alpha.
\end{displaymath}
Since this is true for all weights $H$ of dimension $\alpha$, we arrive at
our desired inequality $\mbb M F(\alpha) \leq \mbb M F(\beta)$.
\end{proof}

Returning to Fourier restriction, for $0 < \alpha \leq 2$ and 
$1 \leq p \leq \infty$, we define $Q(\alpha,p)$ to be the infimum of all
exponents $q > 0$ so that the following holds: there is a constant $C$ 
such
\begin{displaymath}
\int |Ef(x)|^q H(x) dx \leq C \mbb A_\alpha(H) \| f \|_{L^p(\sigma)}^q
\end{displaymath}
for all functions $f \in L^p(\sigma)$ and weights $H$ of fractal dimension 
$\alpha$.

\begin{coro}
\label{corotomaxineqone}
Suppose $0 < \beta < \alpha \leq 2$. Then
\begin{displaymath}
\frac{Q(\alpha,p)}{\alpha} \leq \frac{Q(\beta,p)}{\beta}.
\end{displaymath}
\end{coro}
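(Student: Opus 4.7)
The plan is to reduce the corollary to a direct application of Theorem \ref{maxineqone}, with $F$ chosen as a suitable power of $|Ef|$. The main point is that the defining inequality for $Q(\alpha,p)$ can be rewritten as an estimate on the dimensional maximal function $\mbb M F$.

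First I would fix $f \in L^p(\sigma)$ and any exponent $q > Q(\beta,p)$. By the definition of $Q(\beta,p)$, there is a constant $C$ such that
\begin{displaymath}
\int |Ef(x)|^q H(x) dx \leq C \mbb A_\beta(H) \| f \|_{L^p(\sigma)}^q
\end{displaymath}
for every weight $H$ of fractal dimension $\beta$ (in the $\mbb A_\beta$ sense). Setting $F = |Ef|^{q/\beta}$, this says $\int F(x)^\beta H(x) dx \leq C \mbb A_\beta(H) \| f \|_{L^p(\sigma)}^q$ for every such $H$, and taking the supremum over $H$ with $0 < \mbb A_\beta(H) < \infty$ yields
\begin{displaymath}
\bigl( \mbb M F(\beta) \bigr)^\beta \leq C \| f \|_{L^p(\sigma)}^q .
\end{displaymath}

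Next I would invoke Theorem \ref{maxineqone}, which gives $\mbb M F(\alpha) \leq \mbb M F(\beta)$, hence
\begin{displaymath}
\bigl( \mbb M F(\alpha) \bigr)^\alpha \leq \bigl( \mbb M F(\beta) \bigr)^\alpha \leq C^{\alpha/\beta} \| f \|_{L^p(\sigma)}^{q \alpha / \beta}.
\end{displaymath}
Unwinding the definition of $\mbb M F(\alpha)$ and recalling that $F^\alpha = |Ef|^{q\alpha/\beta}$, this reads
\begin{displaymath}
\int |Ef(x)|^{q\alpha/\beta} H(x) dx \leq C^{\alpha/\beta} \mbb A_\alpha(H) \| f \|_{L^p(\sigma)}^{q \alpha/\beta}
\end{displaymath}
for every weight $H$ with $0 < \mbb A_\alpha(H) < \infty$ (and trivially when $\mbb A_\alpha(H) = 0$ or $\infty$). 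By the definition of $Q(\alpha,p)$, this forces $Q(\alpha,p) \leq q \alpha / \beta$, i.e. $Q(\alpha,p)/\alpha \leq q / \beta$. Since $q$ was arbitrary above $Q(\beta,p)$, letting $q \downarrow Q(\beta,p)$ gives the desired inequality $Q(\alpha,p)/\alpha \leq Q(\beta,p)/\beta$.

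There is no real obstacle here: Theorem \ref{maxineqone} already does the heavy lifting, and the only thing to check is the bookkeeping of exponents, namely that raising a dimension-$\beta$ weighted $L^q$ estimate to the power $\alpha/\beta$ produces a dimension-$\alpha$ weighted $L^{q\alpha/\beta}$ estimate. The one mild subtlety is that $Q(\beta,p)$ is defined as an infimum, so one must work with $q > Q(\beta,p)$ and pass to the limit at the end, but this is automatic from the definitions.
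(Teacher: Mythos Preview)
Your proof is correct and follows essentially the same route as the paper: set $F=|Ef|^{q/\beta}$, rewrite the defining inequality for $Q(\beta,p)$ as a bound on $\mbb M F(\beta)$, apply Theorem \ref{maxineqone} to pass to $\mbb M F(\alpha)$, unwind to get a dimension-$\alpha$ estimate at exponent $q\alpha/\beta$, and let $q \downarrow Q(\beta,p)$. The only cosmetic difference is that you fix $f$ at the outset, but since the constant $C$ from the definition of $Q(\beta,p)$ is uniform in $f$, the final inequality holds for all $f$ and the conclusion $Q(\alpha,p)\leq q\alpha/\beta$ is justified exactly as in the paper.
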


\begin{proof}
The proof of this corollary is the same as the proof of
\cite[Corollary 2.1]{pems201030}. We include it here for the reader's 
convenience.

By the definition of $Q(\beta,p)$, to every $q > Q(\beta,p)$ there is a 
constant $C_q$ such that
\begin{displaymath}
\int |Ef(x)|^q {\mathcal H}(x) dx 
\leq C_q \mbb A_\beta({\mathcal H}) \| f \|_{L^p(\sigma)}^{q}
\end{displaymath}
for all functions $f \in L^p(\sigma)$ and weights ${\mathcal H}$ of fractal
dimension $\beta$. Letting $F= |Ef|^{q/\beta}$, we see that
\begin{displaymath}
\mbb M F(\beta) \leq \big( C_q \| f \|_{L^p(\sigma)}^{q} \big)^{1/\beta}.
\end{displaymath}
Applying Theorem \ref{maxineqone}, we get
\begin{displaymath}
\mbb M F(\alpha) \leq \big( C_q \| f \|_{L^p(\sigma)}^{q} \big)^{1/\beta}.
\end{displaymath}
Therefore,
\begin{displaymath}
\Big( \frac{1}{\mbb A_\alpha(H)} \int |Ef(x)|^{(\alpha/\beta)q} H(x) dx 
\Big)^{1/\alpha}
\leq \Big( C_q \| f \|_{L^p(\sigma)}^{q} \Big)^{1/\beta}
\end{displaymath}
for all functions $f \in L^p(\sigma)$ and weights $H$ of fractal dimension
$\alpha$. 

The definition of $Q(\alpha,p)$ now tells us that 
$(\alpha/\beta)q \geq Q(\alpha,p)$. Therefore,
\begin{displaymath}
q \geq \frac{\beta}{\alpha} Q(\alpha,p)
\end{displaymath}
for every $q > Q(\beta,p)$. Therefore, 
$Q(\beta,p) \geq (\beta / \alpha) Q(\alpha,p)$.
\end{proof}

\subsection{The low-dimensional estimate}

The availability of favorable restriction estimates in low dimensions hangs
on the fact that the Fourier transform of $\sigma$ satisfies the decay
condition (\ref{decayjone}), as we shall see during the proof of the 
following proposition.

\begin{prop}
\label{lowdimestone}
Suppose $0 < \alpha < \delta$. Then, for $q > 1$, we have
\begin{displaymath}
\int |Ef(x)|^q H(x) dx \lct \mbb A_\alpha(H) \| f \|_{L^2(\sigma)}^q
\end{displaymath}
for all functions $f \in L^2(\sigma)$ and weights $H$ of fractal dimension 
$\alpha$.
\end{prop}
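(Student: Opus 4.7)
The plan is to follow the $TT^*$ strategy used for the analogous low-dimensional estimate in \cite{pems201030}, adapted to the new fractal quantity $\mbb A_\alpha$. The main step is the case $q=2$: since $\sigma$ is finite, Cauchy--Schwarz gives $\|Ef\|_{L^\infty(\mbb R^2)} \leq \|\sigma\|^{1/2}\|f\|_{L^2(\sigma)}$, so for $q \geq 2$
\begin{displaymath}
\int |Ef|^q H \, dx \leq \|Ef\|_{L^\infty}^{q-2}\int |Ef|^2 H \, dx \lct \|f\|_{L^2(\sigma)}^{q-2}\int |Ef|^2 H \, dx,
\end{displaymath}
and the $L^2$ bound on $\int |Ef|^2 H\,dx$ gives the claim in this range; the remaining range $1<q<2$ then follows from the $L^2$ bound together with this $L^\infty$ bound by a standard interpolation.

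For the $L^2$ estimate I would write, by duality,
\begin{displaymath}
\int |Ef|^2 H \, dx = \langle f, E^*(H\cdot Ef)\rangle_{L^2(\sigma)} \leq \|f\|_{L^2(\sigma)} \, \|E^*(H\cdot Ef)\|_{L^2(\sigma)},
\end{displaymath}
and, using $E^*(u) = \widehat{u}|_{\mathrm{supp}(\sigma)}$, expand
\begin{displaymath}
\|E^*(H\cdot Ef)\|_{L^2(\sigma)}^2 = \int \int H(x) H(y) \, Ef(x)\, \overline{Ef(y)} \, \widehat\sigma(x-y) \, dx \, dy.
\end{displaymath}
Taking absolute values, using $|Ef(x)||Ef(y)| \leq \frac{1}{2}(|Ef(x)|^2+|Ef(y)|^2)$, and exploiting the symmetry in $x \leftrightarrow y$ reduces the right-hand side to $\int |Ef(x)|^2 H(x) \Phi(x) \, dx$, where $\Phi(x) := \int H(y) |\widehat\sigma(x-y)| \, dy$. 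It remains to prove $\Phi(x) \lct \mbb A_\alpha(H)$ uniformly in $x$.

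For that, (\ref{decayjone}) together with the trivial estimate $|\widehat\sigma| \leq \|\sigma\|$ yields $|\widehat\sigma(z)| \lct (1+|z_1|)^{-\delta}(1+|z_2|)^{-\delta}$. Splitting $\mbb R^2$ dyadically by $|x_i - y_i| \sim 2^{j_i}$ for $j_1, j_2 \geq 0$, each piece is controlled by $2^{-\delta(j_1+j_2)} \int_R H \, dy$ for a rectangle $R$ of dimensions $\sim 2^{j_1}\times 2^{j_2}$ centred at $x$. The needed single-function estimate
\begin{displaymath}
\int_R H \, dy \lct \mbb A_\alpha(H) (R_1 R_2)^{\alpha/2}
\end{displaymath}
is extracted from the two-function definition of $\mbb A_\alpha(H)$ by observing that $R \times R \subset \mbb B(2 x_0, R_1, R_2)$, where $x_0$ is the centre of $R$, whence $(\int_R H)^2 \leq \int_{\mbb B(2x_0, R_1, R_2)} H(y) H(z) \, d(y,z) \leq \mbb A_\alpha(H)^2 (R_1 R_2)^\alpha$. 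Summing,
\begin{displaymath}
\Phi(x) \lct \mbb A_\alpha(H) \sum_{j_1, j_2 \geq 0} 2^{(j_1+j_2)(\alpha/2 - \delta)} \lct \mbb A_\alpha(H),
\end{displaymath}
the series converging since $\alpha < \delta$ (in fact $\alpha < 2\delta$ would suffice). Feeding this back gives $(\int |Ef|^2 H\,dx)^2 \lct \|f\|_{L^2(\sigma)}^2 \, \mbb A_\alpha(H) \int |Ef|^2 H \, dx$, hence the $L^2$ bound, and by the first paragraph the proposition. The principal obstacle is this dyadic step: one must carefully extract a single-function rectangle bound from the $H \otimes H$-type definition of $\mbb A_\alpha(H)$, and then balance it against the Fourier decay of $\sigma$.
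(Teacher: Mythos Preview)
Your $L^2$ bound is correct and the argument for $q\geq 2$ is fine, but the sentence ``the remaining range $1<q<2$ then follows from the $L^2$ bound together with this $L^\infty$ bound by a standard interpolation'' is wrong: interpolating an $L^2(Hdx)$ estimate against an $L^\infty$ estimate only reaches exponents $q\in[2,\infty]$, never $q<2$. Nor can H\"older help, since $\int H\,dx$ is not controlled by $\mbb A_\alpha(H)$. So as written your argument proves the proposition only for $q\geq 2$, not for $q>1$.

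The reason your route cannot reach $1<q<2$ is that you discard the bilinear structure of $\mbb A_\alpha$ at the key moment. By passing to the pointwise bound $\Phi(x)\lct \mbb A_\alpha(H)$ you effectively use only the single-function consequence $\int_R H \lct \mbb A_\alpha(H)(R_1R_2)^{\alpha/2}$ of the definition. The paper instead runs a level-set argument: writing $G=\{\mbox{Re}\,Ef\geq\lambda/4\}$ and arriving at
\[
\lambda^2\mu(G)^2 \lct \|f\|_{L^2(\sigma)}^2 \int (\widehat\sigma\ast(\chi_G\,d\mu))\,\chi_G\,d\mu,
\]
it bounds the double integral on the right by $\sum_{l,m}2^{-(l+m)\delta}(\mu\times\mu)(\mbb B(0,2^{l+1},2^{m+1}))\lct \mbb A_\alpha(H)^2$ \emph{directly from the bilinear definition}, with the full exponent $(R_1R_2)^\alpha$ and, crucially, a bound that is independent of $G$. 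This yields the weak-type estimate $\mu(G)\lct \mbb A_\alpha(H)\|f\|_{L^2(\sigma)}\lambda^{-1}$, and integrating the distribution function then gives all $q>1$. Your pointwise $\Phi$ bound, by contrast, leads only to $\int(\widehat\sigma\ast(\chi_G d\mu))\chi_G d\mu\lct \mbb A_\alpha(H)\mu(G)$, hence $\mu(G)\lct\lambda^{-2}$ and $q>2$ at best. (Your observation that $\alpha<2\delta$ suffices in your sum is correct and pleasant, but it does not compensate for the lost $q$-range; the proposition as stated needs $q>1$, and this feeds into $Q(\beta,2)\leq 1$ in the subsequent application.)
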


\begin{proof}
We may assume that $H \in L^1(\mbb R^2)$. (Otherwise, we work with the 
weight $\chi_{B(0,R)} H$, obtain an estimate that is uniform in $R$, and 
then send $R$ to infinity using the fact that
$\mbb A_\alpha(\chi_{B(0,R)} H) \leq \mbb A_\alpha(H)$.) We define the 
measure $\mu$ on $\mbb R^2$ by $d\mu= H dx$, and observe that
\begin{equation}
\label{dimofmuone}
(\mu \times \mu)(\mbb B(x_0,R_1,R_2)) 
\leq \mbb A_\alpha(H)^2 (R_1 R_2)^\alpha
\end{equation}
for all $x_0 \in \mbb R^2$ and $R_1, R_2 \geq 1$. 

Let $f \in L^2(\sigma)$. We need to estimate $\| E f \|_{L^q(\mu)}$. We have
\begin{equation}
\label{bloc1}
\| E f \|_{L^q(\mu)}^q = \int_0^{\| f \|_{L^1(\sigma)}} q \lambda^{q-1} 
  \mu \big( \big\{ |E f| \geq \lambda \big\} \big) d\lambda.
\end{equation}
Also, the set $\{ |E f| \geq \lambda \}$ is contained in
\begin{displaymath}
\big\{ (\mbox{Re} \, E f)_+ \geq \frac{\lambda}{4} \big\} \cup 
\big\{ (\mbox{Re} \, E f)_- \geq \frac{\lambda}{4} \big\} \cup
\big\{ (\mbox{Im} \, E f)_+ \geq \frac{\lambda}{4} \big\} \cup 
\big\{ (\mbox{Im} \, E f)_- \geq \frac{\lambda}{4} \big\},
\end{displaymath}
where $(\mbox{Re} \, E f)_+$ and $(\mbox{Re} \, E f)_-$ are, respectively, 
the positive and negative parts of $\mbox{Re} \, Ef$; and similarly for 
$\mbox{Im} \, E f$. So, it is enough to estimate the $\mu$-measure of the 
set $\{ (\mbox{Re} \, E f)_+ \geq \lambda/4 \}$, which we denote by $G$.

Since $\lambda > 0$, the set where $(\mbox{Re} \, E f)_+ \geq \lambda/4$ is
the same as the set where $\mbox{Re} \, E f \geq \lambda/4$, so
\begin{displaymath}
\frac{\lambda}{4} \mu(G) 
\leq \int_G (\mbox{Re} \, E f) d\mu
= \mbox{Re} \int_G \, E f \, d\mu
= \mbox{Re} \int \chi_G \, \widehat{fd\sigma} \, d\mu
= \mbox{Re} \int \widehat{\chi_G d\mu} \, f d\sigma,
\end{displaymath}
and so (by Cauchy-Schwarz)
\begin{displaymath}
\lambda^2 \mu(G)^2 \leq 
16 \| f \|_{L^2(\sigma)}^2 \| \widehat{\chi_G d\mu} \|_{L^2(\sigma)}^2.
\end{displaymath}
Now
\begin{displaymath}
\| \widehat{\chi_G d\mu} \|_{L^2(\sigma)}^2
= \int \widehat{\chi_G d\mu} \; \overline{\widehat{\chi_G d\mu}} \, d\sigma
= \int \Big( \overline{\widehat{\chi_G d\mu}} \, d\sigma \widehat{\Big)} \, 
       \chi_G d\mu
= \int (\widehat{\sigma} \ast (\chi_G d\mu)) \, \chi_G d\mu,
\end{displaymath}
so
\begin{equation}
\label{bloc2}
\lambda^2 \mu(G)^2 \leq 16 \| f \|_{L^2(\sigma)}^2 
\int (\widehat{\sigma} \ast (\chi_G d\mu)) \, \chi_G d\mu.
\end{equation}

Let $\psi_0$ be an even $C_0^\infty$ function on $\mbb R$ satisfying 
$0 \leq \psi_0 \leq 1$, $\psi_0=1$ on the interval $[-1,1]$, and $\psi_0=0$ 
outside the interval $(-2,2)$. Also, for $l \in \mbb N$, define 
$\psi_l(r)=\psi_0(r/2^l)-\psi_0(r/2^{l-1})$. Then $\psi_l$ is supported in 
the set $2^{l-1} \leq |r| \leq 2^{l+1}$ for $l \geq 1$, and 
$\sum_{l=0}^\infty \psi_l =1$ on $\mbb R$. 

For $x = (x_1,x_2) \in \mbb R^2$, define 
$\Psi_{l,m}(x)=\psi_l(x_1) \psi_m(x_2)$. Then
\begin{displaymath}
\widehat{\sigma} \ast (\chi_G d\mu) = \sum_{l=0}^\infty \sum_{m=0}^\infty 
(\Psi_{l,m} \, \widehat{\sigma}) \ast (\chi_G d\mu).
\end{displaymath}
Since $\Psi_{0,0}$ is supported in $[-2,2] \times [-2,2]$, and $\Psi_{l,0}$ 
is supported in 
\begin{displaymath}
\{(x_1,x_2) \in \mbb R^2 : 2^{l-1} \leq |x_1| \leq 2^{l+1} \mbox{ and } 
                           |x_2| \leq 2 \}
\end{displaymath}
for $l \in \mbb N$, and $\Psi_{0,m}$ is supported in 
\begin{displaymath}
\{(x_1,x_2) \in \mbb R^2 : |x_1| \leq 2 \mbox{ and } 
                           2^{m-1} \leq |x_2| \leq 2^{m+1} \}
\end{displaymath}
for $m \in \mbb N$, and $\Psi_{l,m}$ is supported in the set
\begin{displaymath}
\{ (x_1,x_2) \in \mbb R^2 : 2^{l-1} \leq |x_1| \leq 2^{l+1} \mbox{ and }
                            2^{m-1} \leq |x_2| \leq 2^{m+1} \}
\end{displaymath}
for $(l,m) \in \mbb N^2$, it follows from (\ref{decayjone}) that 
$|\Psi_{l,m} \, \widehat{\sigma}| \lct 2^{-(l+m) \delta}$ for all 
$(l,m) \in \mbb N^2$. Of course, we also have 
$\| \widehat{\sigma} \|_{L^\infty} \lct 1$, so 
\begin{displaymath}
|\Psi_{l,m} \, \widehat{\sigma}| \lct 2^{-(l+m) \delta}
\hspace{0.25in} \mbox{ for all $l,m \geq 0$,}
\end{displaymath}
so
\begin{eqnarray*}
|(\Psi_{l,m} \, \widehat{\sigma}) \ast (\chi_G d\mu)(x)| 
& \leq & \int |\Psi_{l,m}(x-y) \, \widehat{\sigma}(x-y)| \chi_G(y) d\mu(y) 
         \\
& \lct & 2^{-(l+m) \delta} 
         \int_{|x_1-y_1| \leq 2^{l+1}, |x_2-y_2| \leq 2^{m+1}} 
         \chi_G(y) d\mu(y),
\end{eqnarray*}
and so
\begin{eqnarray*}
\lefteqn{\int |(\Psi_{l,m} \, \widehat{\sigma}) \ast (\chi_G d\mu)(x)| 
         \chi_G(x) d\mu(x)} \\
& \lct & 2^{-(l+m) \delta} \int_{\mbb B(0,2^{l+1},2^{m+1})} \chi_G(x) 
         \chi_G(y) d\mu(x) d\mu(y) \\
& \leq & 2^{-(l+m) \delta} (\mu \times \mu)(\mbb B(0,2^{l+1},2^{m+1}))
\end{eqnarray*}
for all $l,m \geq 0$. Invoking (\ref{dimofmuone}), this becomes
\begin{displaymath}
\int |(\Psi_{l,m} \, \widehat{\sigma}) \ast (\chi_G d\mu)| \chi_G d\mu \lct 
\mbb A_\alpha(H)^2 \, 2^{-(l+m)(\delta-\alpha)}.
\end{displaymath}
Therefore,
\begin{displaymath}
\int (\widehat{\sigma} \ast (\chi_G d\mu)) \chi_G d\mu \lct A_\alpha(H)^2
\sum_{l=0}^\infty \sum_{m=0}^\infty 2^{-(l+m)(\delta-\alpha)} 
\lct \mbb A_\alpha(H)^2,
\end{displaymath}
where we have used the assumption that $0 < \alpha < \delta$.

Returning to (\ref{bloc2}), we now have
$\lambda^2 \mu(G)^2 \lct \| f \|_{L^2(\sigma)}^2 \mbb A_\alpha(H)^2$, so
that 
\begin{displaymath}
\mu(G) \lct \| f \|_{L^2(\sigma)} \mbb A_\alpha(H) \lambda^{-1}.
\end{displaymath} 
Therefore, by (\ref{bloc1}),
\begin{displaymath}
\| E f \|_{L^q(\mu)}^q \lct  \mbb A_\alpha(H) \| f \|_{L^2(\sigma)} 
\int_0^{\| f \|_{L^1(\sigma)}}  \lambda^{q-2} d\lambda
\lct  \mbb A_\alpha(H) \| f \|_{L^2(\sigma)}^q
\end{displaymath}
provided $q > 1$.
\end{proof}

\subsection{The Mizohata-Takeuchi estimate (\ref{starjone})}

In the language of Corollary \ref{corotomaxineqone}, Proposition 
\ref{lowdimestone} says that $Q(\beta,2) \leq 1$ for all 
$0 < \beta < \delta$. So, applying Corollary \ref{corotomaxineqone} with 
$\alpha =1$, we get
\begin{displaymath}
\frac{Q(1,2)}{1} \leq \frac{Q(\beta,2)}{\beta} \leq \frac{1}{\beta}
\end{displaymath}
for all $0 < \beta < \delta$. Therefore, $Q(1,2) \leq 1 / \delta$. 
Therefore,
\begin{equation}
\label{highdimestone}
\int |Ef(x)|^q H(x) dx \lct \mbb A_1(H) \| f \|_{L^2(\sigma)}^q 
\end{equation}  
whenever $q > 1/\delta$, $H$ is a weight of fractal dimension one, and 
$f \in L^2(\sigma)$.

We are now ready to return to our Mizohata-Takeuchi estimate. We let $w$ be 
a non-negative function on $\mbb R^2$ and apply (\ref{highdimestone}) with 
$H_w=\| w \|_{L^\infty}^{-1} w$ to get
\begin{equation}
\label{mtverone}
\int |Ef(x)|^q w(x) dx 
\lct \| w \|_{L^\infty} \mbb A_1(H_w) \| f \|_{L^2(\sigma)}^q, 
\end{equation}
and it remains to estimate $\mbb A_1(H_w)$. We will be able to do this only
when $w$ is of the form (\ref{tensor}):
\begin{displaymath}
w(x) = w(x_1,x_2) = \widetilde{w}(ax_1) \widetilde{w}(bx_2)
\end{displaymath}
with $\widetilde{w} : \mbb R \to [0,\infty)$ and $a,b > 0$.

Recall that, for $m \in \mbb R$, $\mbb T_m$ was defined as follows: a 1-tube 
in $\mbb R^2$ belongs to $\mbb T_m$ if its core line is parallel to the line 
$\{ x=(x_1,x_2) \in \mbb R^2 : m x_1+x_2 = 0 \}$ or the line
$\{ x=(x_1,x_2) \in \mbb R^2 : x_1 + m x_2 =0 \}$.

For $x \in \mbb R^2$ and $R_1, R_2 \geq 1$, we have
\begin{displaymath}
\int_{\mbb B(x,R_1,R_2)} H_w(y)H_w(z) d(y,z) 
= \| w \|_{L^\infty}^{-2} J(a) J(b),
\end{displaymath}
where
\begin{displaymath}
J(a) = \int_{|y_1+z_1-x_1| \leq R_1} \widetilde{w}(ay_1) \widetilde{w}(az_1) 
       d(y_1,z_1)
\end{displaymath}
and
\begin{displaymath}
J(b) = \int_{|y_2+z_2-x_2| \leq R_2} 
       \widetilde{w}(by_2) \widetilde{w}(bz_2) d(y_2,z_2).
\end{displaymath}
Since both integrals $J(a)$ and $J(b)$ are the same, it is enough to bound
one of them.

We write 
\begin{displaymath}
J(b) = \int_{-\infty}^\infty \widetilde{w}(bz_2) I(z_2) dz_2
\hspace{0.25in} \mbox{ with } \hspace{0.25in}
I(z_2) = \int_{x_2-z_2-R_2}^{x_2-z_2+R_2} \widetilde{w}(by_2) dy_2.
\end{displaymath}
Applying the change of variables $v=(b/a) y_2$, the second integral becomes
\begin{displaymath}
I(z_2) = \frac{a}{b} \int_{(b/a)(x_2-z_2-R_2)}^{(b/a)(x_2-z_2+R_2)} 
         \widetilde{w}(av) dv.
\end{displaymath}
Plugging this back in $J(b)$, we see that
\begin{displaymath}
J(b) = \frac{a}{b} 
\int_{\widetilde{T}} \widetilde{w}(av) \widetilde{w}(bz_2) d(v,z_2)
= \frac{a}{b} \int_{\widetilde{T}} w(v,z_2) d(v,z_2)			 
\end{displaymath}
where $\widetilde{T}$ is the tube
\begin{displaymath}
\{ (v,z_2) \in \mbb R^2 : -(a/b) v + x_2 - R_2 \leq z_2 
                          \leq -(a/b) v + x_2 + R_2 \}.  
\end{displaymath}
Since $\widetilde{T}$ has cross-section $2 b R_2 / \sqrt{a^2+b^2}$, it 
follows that
\begin{displaymath}
J(b) \leq \frac{a}{b} \frac{2 b R_2}{\sqrt{a^2+b^2}} 
                \big( \sup_{T \in \mbb T_{a/b}} w(T) \big)
= \frac{2 a R_2}{\sqrt{a^2+b^2}} 
  \big( \sup_{T \in \mbb T_{a/b}} w(T) \big).		 
\end{displaymath} 
Therefore,
\begin{displaymath}
J(a) J(b) \leq \frac{2 b R_1}{\sqrt{a^2+b^2}} \frac{2 a R_2}{\sqrt{a^2+b^2}}
               \big( \sup_{T \in \mbb T_{a/b}} w(T) \big)^2
\leq 2 R_1 R_2 \big( \sup_{T \in \mbb T_{a/b}} w(T) \big)^2.
\end{displaymath}

We have proved that
\begin{displaymath}
\int_{\mbb B(x,R_1,R_2)} H_w(y)H_w(z) d(y,z)
\leq 2 \| w \|_{L^\infty}^{-2} \big( \sup_{T \in \mbb T_{a/b}} w(T) \big)^2
     R_1 R_2
\end{displaymath}
for all points $x \in \mbb R^2$ and numbers $R_1, R_2 \geq 1$. By the
definition of the functional $\mbb A_\alpha$, this implies that
\begin{displaymath}
\mbb A_1(H_w) \leq \sqrt{2} \, \| w \|_{L^\infty}^{-1}
                   \big( \sup_{T \in \mbb T_{a/b}} w(T) \big).
\end{displaymath}
Inserting this back in (\ref{mtverone}), we get
\begin{displaymath}
\int |Ef(x)|^q w(x) dx 
\lct \big( \sup_{T \in \mbb T_{a/b}} w(T) \big) \| f \|_{L^2(\sigma)}^q
\end{displaymath}
for $q > \delta^{-1}$, as promised.

\section{Proof of Theorem \ref{mainjtwo}}

\subsection{The dimensional maximal function}

Let $v$ be the unit vector given in the statement of Theroem \ref{mainjtwo}.
We start by introducing a notation. For $x \in \mbb R^2$ and $r > 0$, we let
$T(x,r)$ be the tube of cross-section $2r$ that contains the ball $B(x,r)$ 
and is parallel to $v$ (more precisely, the core line of $T(x,r)$ is 
parallel to the vector $v$).

Suppose $0 < \alpha \leq 2$ and $H : \mbb R^2 \to [0,1]$ is a Lebesgue 
measurable function. We define 
\begin{displaymath}
{\mathcal A}_\alpha(H) 
= \inf \Big\{ C : \int_{T(x_0,R)} H(x) dx \leq C R^\alpha 
	    \mbox{ for all } x_0 \in \mbb R^2 \mbox{ and } R \geq 1 \Big\}.
\end{displaymath}
We say $H$ is a {\it weight of fractal dimension} $\alpha$ if 
${\mathcal A}_\alpha(H) < \infty$. Also, for Lebesgue measurable 
$F : \mbb R^2 \to [0, \infty)$, we define
\begin{displaymath}
{\mathcal M} F(\alpha) = \Big( \sup \frac{1}{{\mathcal A}_\alpha(H)} 
                         \int F(x)^\alpha H(x) dx \Big)^{1/\alpha},
\end{displaymath}
where the sup is taken over all $H$ that obey the condition 
$0 < {\mathcal A_\alpha}(H) < \infty$.

\begin{thm}
\label{maxineqtwo}
Suppose $0 < \beta < \alpha \leq 2$. Then
\begin{displaymath}
{\mathcal M} F(\alpha) \leq {\mathcal M} F(\beta)
\end{displaymath}
for all non-negative Lebesgue measurable functions $F$ on $\mbb R^2$.
\end{thm}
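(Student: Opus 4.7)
The plan is to run an argument that parallels the proof of Theorem \ref{maxineqone} essentially verbatim, with the four-dimensional sets $\mbb B(x,R_1,R_2)$ replaced throughout by the two-dimensional tubes $T(x_0,R)$. The underlying scheme is a Hölder bootstrap that starts from a trivial base estimate and upgrades the power of $F$ in the weighted integral $\int F^\beta H$ step by step, producing a sequence of exponents $\beta_k$ that converges to $\alpha$. The only difference is that we no longer need to tensor $H$ with itself (which is what forced the square-root in the definition of $\mbb A_\alpha(H)$ and the appearance of $\mbb B(x,R_1,R_2)$): the ``one-dimensional'' nature of the tube condition lets us work directly with $H$ on $\mbb R^2$.

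Concretely, first I would truncate as before: for $N \in \mbb N$, set $\chi_N$ to be the characteristic function of $B(0,N) \cap \{F \leq N\}$ and $F_N = N^{-1}\chi_N F$, so that $0 \leq F_N \leq 1$ and $F_N$ is supported in $B(0,N)$. Since $B(0,N) \subset T(0,N)$, the base estimate
\begin{displaymath}
\int F_N(x) H(x)\,dx \leq \int_{T(0,N)} H(x)\,dx \leq {\mathcal A}_\alpha(H) N^\alpha
\end{displaymath}
holds, giving $\int F_N^{\beta_0} H\,dx \leq C_0 {\mathcal A}_\alpha(H)$ with $\beta_0 = 1$ and $C_0 = N^\alpha$ (in place of the $2^\alpha N^\alpha$ that appeared in the previous theorem).

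Next I would perform the inductive Hölder step. Given $\int F_N^{\beta_k} H\,dx \leq C_k {\mathcal A}_\alpha(H)$, for any $x_0 \in \mbb R^2$, $R \geq 1$, and conjugate exponents $p,p' > 1$, Hölder's inequality with respect to $H\,dx$ gives
\begin{displaymath}
\int_{T(x_0,R)} F_N^{\beta_k/p} H\,dx \leq \left( \int F_N^{\beta_k} H\,dx \right)^{1/p} \left( \int_{T(x_0,R)} H\,dx \right)^{1/p'} \leq C_k^{1/p} {\mathcal A}_\alpha(H)\, R^{\alpha/p'}.
\end{displaymath}
Choosing $p = \alpha/(\alpha-\beta)$ so that $\alpha/p' = \beta$, this shows that ${\mathcal H} = F_N^{\beta_k/p} H$ is a weight of fractal dimension $\beta$ with ${\mathcal A}_\beta({\mathcal H}) \leq C_k^{1/p} {\mathcal A}_\alpha(H)$. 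Then, by the definition of ${\mathcal M}F_N(\beta)$ and the monotonicity ${\mathcal M} F_N(\beta) \leq N^{-1} {\mathcal M} F(\beta)$,
\begin{displaymath}
\int F_N^{\beta + \beta_k/p}\, H\,dx \leq N^{-\beta} \big({\mathcal M}F(\beta)\big)^\beta C_k^{1/p} {\mathcal A}_\alpha(H),
\end{displaymath}
which is the recursion $\beta_{k+1} = \beta + \beta_k/p$, $C_{k+1} = M C_k^{1/p}$ with $M = N^{-\beta}({\mathcal M}F(\beta))^\beta$.

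Finally I would close the argument exactly as in Theorem \ref{maxineqone}: the fixed-point analysis of the recursion (identical to \cite[\S 5]{pems201030}) yields $\beta_k \to \beta/(1-1/p) = \alpha$ and $C_k \to M^{\alpha/\beta} = N^{-\alpha}({\mathcal M}F(\beta))^\alpha$, so Fatou's lemma gives $\int F_N^\alpha H\,dx \leq N^{-\alpha}({\mathcal M}F(\beta))^\alpha$, equivalently $\int \chi_N F^\alpha H\,dx \leq ({\mathcal M}F(\beta))^\alpha$. Sending $N \to \infty$ and taking the supremum over admissible $H$ produces ${\mathcal M}F(\alpha) \leq {\mathcal M}F(\beta)$. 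There is no real obstacle here—the whole content of the argument is already captured by Theorem \ref{maxineqone}; one just needs to observe that the tube condition $\int_{T(x_0,R)} H \leq {\mathcal A}_\alpha(H) R^\alpha$ plays the same role in the Hölder step as the $\mbb B(x,R_1,R_2)$ condition did, and that the base estimate goes through because balls are contained in tubes of comparable dimensions.
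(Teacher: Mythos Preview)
Your proposal is correct and matches the approach the paper indicates: the paper omits the details, noting only that the argument is essentially that of \cite[Theorem~1.1]{pems201030} (even more closely than the proof of Theorem~\ref{maxineqone} was), and your write-up is exactly that adaptation---the truncation, the H\"older bootstrap with $p=\alpha/(\alpha-\beta)$ to produce a $\beta$-dimensional weight ${\mathcal H}=F_N^{\beta_k/p}H$, the recursion $\beta_{k+1}=\beta+\beta_k/p$, and the Fatou/limit closing are all carried out correctly.
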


The proof of Theorem \ref{maxineqtwo} is much closer to the proof of  
\cite[Theorem 1.1]{pems201030} than the proof of Theorem \ref{maxineqone} 
was, so we omit the details.

For $0 < \alpha \leq 2$ and $1 \leq p \leq \infty$, we define 
${\mathcal Q}(\alpha,p)$ to be the infimum of all numbers $q > 0$ so that 
the following holds: there is a constant $C$ such
\begin{displaymath}
\int |Ef(x)|^q H(x) dx \leq C {\mathcal A}_\alpha(H) \| f \|_{L^p(\sigma)}^q
\end{displaymath}
for all functions $f \in L^p(\sigma)$ and weights $H$ of fractal dimension 
$\alpha$. Theorem \ref{maxineqtwo} has the following corollary whose proof
is the same as that of Corollary \ref{corotomaxineqone} and  
\cite[Corollary 2.1]{pems201030}.

\begin{coro}
\label{corotomaxineqtwo}
Suppose $0 < \beta < \alpha \leq 2$. Then
\begin{displaymath}
\frac{{\mathcal Q}(\alpha,p)}{\alpha}  
\leq \frac{{\mathcal Q}(\beta,p)}{\beta}.
\end{displaymath}
\end{coro}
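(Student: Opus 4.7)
The plan is to mimic the proof of Corollary \ref{corotomaxineqone} verbatim, replacing the maximal function $\mbb M$, the functional $\mbb A_\alpha$, and the exponent $Q$ by their ``curly'' analogues $\mathcal M$, $\mathcal A_\alpha$, and $\mathcal Q$, and using Theorem \ref{maxineqtwo} in place of Theorem \ref{maxineqone}. Since the abstract maximal-function inequality is precisely the input that made the proof of Corollary \ref{corotomaxineqone} work, once Theorem \ref{maxineqtwo} is in hand the same argument transfers with no real change.

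More concretely, I would start from the definition of ${\mathcal Q}(\beta,p)$: for every $q > {\mathcal Q}(\beta,p)$ there is a constant $C_q$ such that
\begin{displaymath}
\int |Ef(x)|^q {\mathcal H}(x)\,dx
\leq C_q\, {\mathcal A}_\beta({\mathcal H})\, \| f \|_{L^p(\sigma)}^q
\end{displaymath}
for all $f \in L^p(\sigma)$ and all weights ${\mathcal H}$ of fractal dimension $\beta$ (in the sense of this section, i.e.\ controlled on tubes $T(x_0,R)$ parallel to $v$). Setting $F = |Ef|^{q/\beta}$, this says exactly that
\begin{displaymath}
{\mathcal M}F(\beta) \leq \bigl( C_q \| f \|_{L^p(\sigma)}^q \bigr)^{1/\beta}.
\end{displaymath}

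Next I would apply Theorem \ref{maxineqtwo} to conclude that
\begin{displaymath}
{\mathcal M}F(\alpha) \leq {\mathcal M}F(\beta)
\leq \bigl( C_q \| f \|_{L^p(\sigma)}^q \bigr)^{1/\beta}.
\end{displaymath}
Unfolding the definition of ${\mathcal M}F(\alpha)$ and recalling that $F = |Ef|^{q/\beta}$, this reads
\begin{displaymath}
\Bigl( \frac{1}{{\mathcal A}_\alpha(H)}
       \int |Ef(x)|^{(\alpha/\beta)q} H(x)\,dx \Bigr)^{1/\alpha}
\leq \bigl( C_q \| f \|_{L^p(\sigma)}^q \bigr)^{1/\beta}
\end{displaymath}
for every weight $H$ of fractal dimension $\alpha$, hence
\begin{displaymath}
\int |Ef(x)|^{(\alpha/\beta)q} H(x)\,dx
\leq C_q^{\alpha/\beta}\, {\mathcal A}_\alpha(H)\,
     \| f \|_{L^p(\sigma)}^{(\alpha/\beta)q}.
\end{displaymath}

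The definition of ${\mathcal Q}(\alpha,p)$ then forces $(\alpha/\beta)q \geq {\mathcal Q}(\alpha,p)$, i.e.\ $q \geq (\beta/\alpha){\mathcal Q}(\alpha,p)$, for every $q > {\mathcal Q}(\beta,p)$. Taking the infimum over such $q$ gives ${\mathcal Q}(\beta,p) \geq (\beta/\alpha){\mathcal Q}(\alpha,p)$, which is the desired inequality. There is no real obstacle here: the only ingredient doing any work is Theorem \ref{maxineqtwo}, and the rest is a direct copy of the proof of Corollary \ref{corotomaxineqone} with the new notation. The one point to be slightly careful about is the case ${\mathcal Q}(\beta,p) = \infty$ or ${\mathcal Q}(\alpha,p) = 0$, which are trivial, and the harmless technicality that $F = |Ef|^{q/\beta}$ is indeed a non-negative Lebesgue measurable function on $\mbb R^2$ so that Theorem \ref{maxineqtwo} applies.
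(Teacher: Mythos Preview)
Your proposal is correct and is exactly the approach the paper takes: it states that the proof of Corollary~\ref{corotomaxineqtwo} is the same as that of Corollary~\ref{corotomaxineqone}, with Theorem~\ref{maxineqtwo} in place of Theorem~\ref{maxineqone}. Your write-up faithfully carries this out.
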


\subsection{The low-dimensional estimate}

In this subsection, we prove the following result, which, like Proposition
\ref{lowdimestone}, was motivated by \cite[Proposition 6.1]{pems201030}.

\begin{prop}
\label{lowdimesttwo}
Suppose $0 < \alpha < \delta$. Then, for $q > 2$, we have
\begin{displaymath}
\int |Ef(x)|^q H(x) dx \lct {\mathcal A}_\alpha(H) \| f \|_{L^2(\sigma)}^q
\end{displaymath}
for all functions $f \in L^2(\sigma)$ and weights $H$ of fractal dimension 
$\alpha$.
\end{prop}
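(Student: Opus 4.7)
The plan is to mirror the strategy used in the proof of Proposition~\ref{lowdimestone}, adapted to the one-directional decay hypothesis (\ref{decayjtwo}) and the one-parameter tube definition of $\mathcal{A}_\alpha$. After the standard reduction to $H \in L^1(\mbb R^2)$ (by replacing $H$ with $\chi_{B(0,R)}H$ and letting $R \to \infty$, using that $\mathcal{A}_\alpha(\chi_{B(0,R)}H) \leq \mathcal{A}_\alpha(H)$), set $d\mu = H\,dx$, so the fractal-dimension assumption becomes $\mu(T(x_0, R)) \leq \mathcal{A}_\alpha(H)\,R^\alpha$ for every $x_0 \in \mbb R^2$ and $R \geq 1$. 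Opening the distribution-function identity
\begin{displaymath}
\|Ef\|_{L^q(\mu)}^q = \int_0^{\|f\|_{L^1(\sigma)}} q\lambda^{q-1}\,\mu(\{|Ef|\geq\lambda\})\,d\lambda,
\end{displaymath}
splitting $\{|Ef|\geq\lambda\}$ into the four parts determined by the positive and negative parts of $\mathrm{Re}\,Ef$ and $\mathrm{Im}\,Ef$, and focusing on a single such superlevel set $G$, duality together with Cauchy--Schwarz yields, exactly as in Proposition~\ref{lowdimestone},
\begin{displaymath}
\lambda^2 \mu(G)^2 \lct \|f\|_{L^2(\sigma)}^2 \int (\widehat{\sigma}\ast(\chi_G\,d\mu))\,\chi_G\,d\mu.
\end{displaymath}

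The main departure from the previous argument lies in decomposing $\widehat{\sigma}$. Instead of the two-dimensional Littlewood--Paley partition along the coordinate axes, I would use a one-dimensional one in the $v$-direction: with $\psi_l$ the usual dyadic bumps on $\mbb R$ (so $\psi_l$ is supported in $|s| \sim 2^l$ for $l \geq 1$ and in $|s| \leq 2$ for $l = 0$), write $\widehat{\sigma}(x) = \sum_{l \geq 0}\psi_l(x\cdot v)\widehat{\sigma}(x)$. For $l \geq 1$ the support forces $|x\cdot v| \sim 2^l$, hence $|x| \geq 1$, so (\ref{decayjtwo}) gives $|\psi_l(x\cdot v)\widehat{\sigma}(x)| \lct 2^{-l\delta}$; for $l = 0$ the trivial bound $|\widehat{\sigma}| \lct 1$ suffices.

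The geometric heart of the proof is the observation that the dyadic slab $\{y : |(x-y)\cdot v|\lct 2^l\}$ is a translate of the tube $T(x, O(2^l))$, so the fractal-dimension bound applied to $\mu$ yields, uniformly in $x$,
\begin{displaymath}
\int_{|(x-y)\cdot v|\lct 2^l}\chi_G(y)\,d\mu(y) \lct \mathcal{A}_\alpha(H)\,2^{l\alpha}.
\end{displaymath}
The $l$th convolution piece is therefore at most $2^{-l(\delta-\alpha)}\mathcal{A}_\alpha(H)\,\mu(G)$; since $\alpha < \delta$, summing in $l$ produces $\int(\widehat{\sigma}\ast(\chi_G\,d\mu))\chi_G\,d\mu \lct \mathcal{A}_\alpha(H)\,\mu(G)$. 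Substituting back gives $\lambda^2 \mu(G) \lct \|f\|_{L^2(\sigma)}^2\,\mathcal{A}_\alpha(H)$, and integrating this $\lambda^{-2}$ bound against $q\lambda^{q-1}\,d\lambda$ over $[0, \|f\|_{L^1(\sigma)}]$ yields
\begin{displaymath}
\|Ef\|_{L^q(\mu)}^q \lct \mathcal{A}_\alpha(H)\,\|f\|_{L^2(\sigma)}^2\,\|f\|_{L^1(\sigma)}^{q-2};
\end{displaymath}
the conclusion then follows via $\|f\|_{L^1(\sigma)} \leq \|\sigma\|^{1/2}\|f\|_{L^2(\sigma)}$. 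The constraint $q > 2$ (as opposed to $q > 1$ in Proposition~\ref{lowdimestone}) arises precisely from the convergence of this final $\lambda$-integral at the origin, which in turn reflects the fact that the single-parameter functional $\mathcal{A}_\alpha$ contributes only one power of $\mu(G)$ to the key estimate, as opposed to the $\mu(G)^2$ produced by the tensor structure in Proposition~\ref{lowdimestone}. The main point requiring care is aligning the orientation of $T(x,r)$ with the direction $v$ in which decay is assumed; everything else is a direct transcription.
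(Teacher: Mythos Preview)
Your proposal is correct and follows essentially the same approach as the paper: reduce to $H\in L^1$, use the layer-cake formula and duality to reach $\lambda^2\mu(G)^2 \lct \|f\|_{L^2(\sigma)}^2 \int(\widehat{\sigma}\ast(\chi_G\,d\mu))\chi_G\,d\mu$, perform a one-parameter Littlewood--Paley decomposition $\Psi_l(x)=\psi_l(x\cdot v)$, use (\ref{decayjtwo}) together with the tube bound (\ref{dimofmutwo}) to obtain the pointwise estimate $|\widehat{\sigma}\ast(\chi_G\,d\mu)|\lct \mathcal{A}_\alpha(H)$, and then integrate the resulting $\lambda^{-2}$ bound. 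The only cosmetic difference is that the paper states the pointwise bound on $\widehat{\sigma}\ast(\chi_G\,d\mu)$ explicitly before inserting it, whereas you pass directly to the integrated form; also note that the slab $\{y:|(x-y)\cdot v|\leq 2^{l+1}\}$ is exactly $T(x,2^{l+1})$ rather than a translate of it.
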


\begin{proof}
We begin, as in the proof of Proposition \ref{lowdimestone}, by assuming 
that $H \in L^1(\mbb R^2)$, defining the measure $\mu$ on $\mbb R^2$ by 
$d\mu= H dx$, and observing that
\begin{equation}
\label{dimofmutwo}
\mu(T(x_0,R)) \leq {\mathcal A}_\alpha(H) R^\alpha
\end{equation}
for all $x_0 \in \mbb R^2$ and $R \geq 1$. 

Next, for $f \in L^2(\sigma)$, we write
\begin{equation}
\label{bloc1two}
\| E f \|_{L^q(\mu)}^q = \int_0^{\| f \|_{L^1(\sigma)}} q \lambda^{q-1} 
  \mu \big( \big\{ |E f| \geq \lambda \big\} \big) d\lambda,
\end{equation}
and observe that we will be done as soon as we derive a suitable bound on 
the $\mu$ measure of the set where $\mbox{Re} \, E f \geq \lambda/4$, which 
we again denote by the letter $G$. 

As in the proof of Proposition \ref{lowdimestone}, we have
\begin{equation}
\label{bloc2two}
\lambda^2 \mu(G)^2 \leq 16 \| f \|_{L^2(\sigma)}^2 
\int (\widehat{\sigma} \ast (\chi_G d\mu)) \, \chi_G d\mu.
\end{equation}
However, instead of showing that 
\begin{displaymath}
\int (\widehat{\sigma} \ast (\chi_G d\mu)) \, \chi_G d\mu 
\lct {\mathcal A}_\alpha(H)^2
\end{displaymath}
as we did in Proposition \ref{lowdimestone}, here, we will only be able to 
show that 
\begin{equation}
\label{lastbd}
|\widehat{\sigma} \ast (\chi_G d\mu)(x)| \lct {\mathcal A}_\alpha(H) 
\end{equation}
for all $x \in \mbb R^2$. 

We let $\psi_0$ be an even $C_0^\infty$ function on $\mbb R$ satisfying 
$0 \leq \psi_0 \leq 1$, $\psi_0=1$ on the interval $[-1,1]$, and $\psi_0=0$ 
outside the interval $(-2,2)$. For $l \in \mbb N$, we then define 
$\psi_l(r)=\psi_0(r/2^l)-\psi_0(r/2^{l-1})$. The function $\psi_l$ is 
supported in the set $2^{l-1} \leq |r| \leq 2^{l+1}$ for $l \geq 1$, and 
$\sum_{l=0}^\infty \psi_l =1$ everywhere on $\mbb R$. 

For $x = (x_1,x_2) \in \mbb R^2$, we also define 
$\Psi_l(x)=\psi_l(x \cdot v)$. Then
\begin{displaymath}
\widehat{\sigma} \ast (\chi_G d\mu) 
= \sum_{l=0}^\infty (\Psi_l \, \widehat{\sigma}) \ast (\chi_G d\mu).
\end{displaymath}
Since $\Psi_0$ is supported in the tube  $|x \cdot v| \leq 2$, and $\Psi_l$ 
is supported in the tube $2^{l-1} \leq |x \cdot v| \leq 2^{l+1}$ for 
$l \in \mbb N$, it follows from (\ref{decayjtwo}) that 
$|\Psi_l \, \widehat{\sigma}| \lct 2^{-l \delta}$ for all $l \in \mbb N$. 
Since $\| \widehat{\sigma} \|_{L^\infty} \lct 1$, this is also true for 
$l=0$. So,
\begin{displaymath}
|\Psi_l \, \widehat{\sigma}| \lct 2^{-l \delta}
\hspace{0.25in} \mbox{ for all $l \geq 0$,}
\end{displaymath}
and so
\begin{eqnarray*}
|(\Psi_l \, \widehat{\sigma}) \ast (\chi_G d\mu)(x)| 
& \leq & \int |\Psi_l(x-y) \, \widehat{\sigma}(x-y)| \chi_G(y) d\mu(y) 
         \\
& \lct & 2^{-l \delta} \int_{|(x-y) \cdot v| \leq 2^{l+1}} \chi_G(y) d\mu(y)
         \\
& \leq & 2^{-l \delta} \mu(T(x,2^{l+1})).
\end{eqnarray*}
Invoking (\ref{dimofmutwo}), this becomes
\begin{displaymath}
|(\Psi_l \, \widehat{\sigma}) \ast (\chi_G d\mu)(x)| 
\lct {\mathcal A}_\alpha(H) 2^{-l (\delta-\alpha)}
\end{displaymath}
for all $x \in \mbb R^2$ and $l \geq 0$. Summing over all $l$ using the 
assumption $\alpha < \delta$, this proves (\ref{lastbd}).

Inserting the bound (\ref{lastbd}) in (\ref{bloc2two}), we get
\begin{displaymath}
\lambda^2 \mu(G)^2 
\lct \| f \|_{L^2(\sigma)}^2 \int {\mathcal A}_\alpha(H) \chi_G d\mu
= \| f \|_{L^2(\sigma)}^2 {\mathcal A}_\alpha(H) \mu(G),
\end{displaymath}
which gives
\begin{displaymath}
\mu(G) \lct {\mathcal A}_\alpha(H) \| f \|_{L^2(\sigma)}^2 \lambda^{-2}.
\end{displaymath}
Therefore, by (\ref{bloc1two}),
\begin{displaymath}
\| E f \|_{L^q(\mu)}^q \lct  {\mathcal A}_\alpha(H) \| f \|_{L^2(\sigma)}^2 
\int_0^{\| f \|_{L^1(\sigma)}}  \lambda^{q-3} d\lambda
\lct  {\mathcal A}_\alpha(H) \| f \|_{L^2(\sigma)}^q
\end{displaymath}
provided $q > 2$.
\end{proof}

\subsection{The Mizohata-Takeuchi estimate (\ref{starjtwo})}

In the language of Corollary \ref{corotomaxineqtwo}, Proposition 
\ref{lowdimesttwo} says that ${\mathcal Q}(\beta,2) \leq 2$ for all 
$0 < \beta < \delta$. So, applying Corollary \ref{corotomaxineqtwo} with 
$\alpha =1$, we get
\begin{displaymath}
\frac{{\mathcal Q}(1,2)}{1} \leq \frac{{\mathcal Q}(\beta,2)}{\beta} 
\leq \frac{2}{\beta}
\end{displaymath}
for all $0 < \beta < \delta$. Therefore, $Q(1,2) \leq 2 / \delta$, which 
means that
\begin{equation}
\label{highdimesttwo}
\int |Ef(x)|^q H(x) dx \lct {\mathcal A}_1(H) \| f \|_{L^2(\sigma)}^q 
\end{equation}  
whenever $q > 2/\delta$, $H$ is a weight of fractal dimension one, and 
$f \in L^2(\sigma)$.

If the function $w$ is as given in the statement of Theorem \ref{mainjtwo},
then it is easy to see that
\begin{displaymath}
\int_{T(x_0,R)} w(x) dx \lct \big( \sup_{T \in \mbb T_v} w(T) \big) R
\end{displaymath} 
for all $x_0 \in \mbb R^2$ and $R \geq 1$. Thus
\begin{displaymath}
{\mathcal A}_1(\| w \|_{L^\infty}^{-1} w) 
\lct \| w \|_{L^\infty}^{-1} \big( \sup_{T \in \mbb T_v} w(T) \big).
\end{displaymath} 
Inserting this bound back in (\ref{highdimesttwo}) (with $H$ replaced by
$\| w \|_{L^\infty}^{-1} w$), we arrive at our Mizohata-Takeuchi estimate 
(\ref{starjtwo}).

\end{document}